\numberwithin{equation}{section}
\newcommand{\abs}[1]{\lvert#1\rvert}
\DeclareMathOperator{\Absset}{Abs}  
\DeclareMathOperator{\Logset}{Log}  
\DeclareMathOperator{\LogAbsset}{LogAbs}
\newtheorem{theorem}{Theorem}[section]
\newtheorem{fact}{Fact}[section]
\newtheorem{lemma}[theorem]{Lemma}
\title{A symmetric multivariate Elekes--R\'{o}nyai theorem}
\author{Yewen Sun}
\address{Department of Mathematics, The Ohio State University, Columbus, OH, 43210, USA}
\email{}
\subjclass[2020]{}
\begin{document}

\begin{abstract}We consider a polynomial $P\in \mathbb{R}[x_{1},\ldots, x_{d}]$ of degree $\delta$ that depends non-trivially on each of $x_1,\ldots,x_d$ with $d\ge 2$. For any integer $t$ with $1\le t\le d-1$, any natural number $n\in\mathbb N$, and any finite set $A\subset\mathbb R$ of size $n$, write
$$P(A,\dots,A):=\{P(a_1,\dots,a_d): a_1,\dots,a_d\in A\}.$$
Our first result shows that $$|P(A,\dots,A)|\gg_{\delta} n^{\frac{3}{2}-\frac{1}{2^{t+1}}}$$ unless
$P(x_1,\dots,x_d)=f\big(u_1(x_1)+\cdots+u_d(x_d)\big)$ or
$P(x_1,\dots,x_d)=f\big(u_1(x_1)\cdots u_d(x_d)\big)$,
for some nonconstant univariate real polynomials $f(x),u_1(x),\ldots,u_d(x)$, and there exists an index subset $I\subseteq[d]$ with $|I|=d-\lfloor \frac{t-1}{2}\rfloor$ such that for any $i,j\in I$ we have $u_i(x)=\lambda_{ij}u_j(x)$ (in the additive case) or $|u_i(x)|=|u_j(x)|^{\kappa_{ij}}$ (in the multiplicative case) for some constants $\lambda_{ij}\in\mathbb R^{\times}$ and $\kappa_{ij}\in\mathbb Q^{+}$. This result generalizes the symmetric Elekes--R\'onyai theorem proved by Jing, Roy, and Tran. 
Our second result is a generalized Erd\H{o}s--Szemer\'edi theorem for two polynomials in higher dimensions, giving a lower bound for $\max\{|P(A,\dots,A)|,|Q(A,\dots,A)|\}$, and generalizing another theorem by Jing, Roy, and Tran. A key ingredient in our proofs is a variation of a theorem by Elekes, Nathanson, and Ruzsa.
\end{abstract}

\maketitle

\section{Introduction}

 Let $f\in \mathbb{R}[x,y]\setminus(\mathbb{R}[x]\cup \mathbb{R}[y])$ be a polynomial of degree $\delta$. Assume $A,B\subset \mathbb{R}$ are two finite sets, each of size $n$, and define $$f(A,B):=\{f(a,b):a\in A, b\in B\}.$$ Elekes and  R\'{o}nyai ~\cite{elekes2000combinatorial} showed that $|f(A, B)| \gg_{\delta} n^{1+\varepsilon}$ for some $\varepsilon > 0$, unless $$f(x, y) = a(b(x) + c(y)) \text{~or~} f(x, y) = a(b(x)c(y)),$$ where $a(x), b(x), c(x)$ are univariate polynomials over $\mathbb{R}$. Many improvements and generalizations have been made. Recent efforts are mostly based on a generalization proved by Elekes--Szab\'{o} ~\cite{elekes2012find}, followed by a series of generalizations and quantitative improvements ~\cite{raz2016polynomials,raz2016polynomials2,raz2018elekes, Bays2018ProjectiveGA, raz2020expanding, solymosi2024improved}. The first significant improvement $\varepsilon=\frac{1}{3}$ was obtained by Raz--Sharir--Solymosi~\cite{raz2016polynomials2}, and the best known bound $\varepsilon =\frac{1}{2}$ was due to Solymosi--Zahl ~\cite{solymosi2024improved}.

  \par The Elekes--R\'{o}nyai theorem has a number of important applications, and we refer the interested reader to the excellent survey by de Zeeuw ~\cite{de2018survey}. Here we only emphasize one special example from Elekes, Nathanson, and Ruzsa ~\cite{elekes2000convexity}. They proved that $$|f(A,A)|\gg |A|^{5/4}$$ when $f(x,y)=x+y^2$.
Note that $f(x,y)=x+y^2$ is of the special additive form $a(b(x)+c(y))$ (with $a=\mathrm{id}$, $b(x)=x$, $c(y)=y^2$),
so the usual Elekes--R\'onyai theorem does not yield a superlinear lower bound in this case.
This motivates the search for a \emph{symmetric} variant that can still force expansion when $A=B$.
 Therefore, a natural conjecture arises, proposed by de Zeeuw ~\cite{de2018survey}, suggesting that there should be a powerful variation of the Elekes--R\'{o}nyai theorem applicable to functions such as $f(x,y)=x+y^2$, particularly in the case where $A=B$. In our paper, whenever we have a condition like $A=B$, we refer to it as the symmetric case.
  
\par This conjecture was solved by Jing, Roy, and Tran ~\cite{jing2022semialgebraic}. They proved a symmetric version of the Elekes--R\'{o}nyai theorem in two dimensions. To introduce their result better, we first define two equivalence relations $\equiv_{a}$ and $\equiv_{m}$ over polynomials $\mathbb{R}[x]$. We write $$p(x) \equiv_{a} q(x)$$ if there exists a constant $\lambda\in\mathbb R^\times $ such that $p(x)=\lambda q(x)$  for all $x\in \mathbb{R}$. We also write $$p(x) \equiv_{m} q(x)$$ if there exists a constant $\kappa\in\mathbb{Q}^{+} $ such that $|p(x)|=|q(x)|^\kappa$ for all $x\in \mathbb{R}$.

They showed that for any finite set $A \subset \mathbb{R}$, we have $|f(A, A)| \gg_{\delta} |A|^{5/4}$ unless $f$ is of the form $$f(x,y)=a(b(x)+c(y)),$$ where $a(x),b(x),c(x)\in\mathbb{R}[x]$ and $b(x)\equiv_a c(x)$, or $$f(x,y)=a(b(x)c(y)),$$ where $a(x),b(x),c(x)\in\mathbb{R}[x]$ and $b(x)\equiv_{m} c(x)$. For example, when $f(x,y)=x+y^2$, it is neither of the two forms above, so their result implies that $f(A,A)\gg |A|^{5/4}$.

 The Elekes--R\'{o}nyai theorem in high dimensions was given by 
Raz--Shem-Tov ~\cite{raz2020expanding}. Let $P \in \mathbb{R}[x_1, \ldots, x_d]$ for some $d \geq 3$ with degree $\delta$, and assume that $P$ depends non-trivially on each of $x_1, \ldots, x_d$. They proved that for finite sets $A_1, \ldots, A_d \subset \mathbb{R}$, each of size $n$,
$$
|P(A_1, \ldots ,A_d)|\gg_{\delta}n^{3 / 2},
$$
unless $P$ is of the form
$$
\begin{aligned}
& P(x_1, \ldots, x_d)=f(u_1(x_1)+\cdots+u_d(x_d)) \quad\text { or } \\
& P(x_1, \ldots, x_d)=f(u_1(x_1) \cdot \ldots \cdot u_d(x_d))
\end{aligned}
$$
for some univariate polynomials $f(x), u_1(x), \ldots, u_d(x)\in\mathbb{R}[x]$. 

It is natural to ask whether we can establish a symmetric version of the theorem above, and this is one of our main contributions in this paper. It also serves as a multivariate generalization of the results in ~\cite{jing2022semialgebraic}. We adopt a different strategy from ~\cite{jing2022semialgebraic} to reprove their result, and our approach can be generalized to higher dimensions. To state the theorem, we use the notation $[d]=\{1,\ldots,d\}$.

\begin{theorem}\label{thm:main1} 

Let $P(x_{1},\ldots,x_{d})$ be a polynomial in $ \mathbb{R}[x_{1},\ldots,x_{d}]$  for some $d\geq 2.$ Assume $P$ has degree $ \delta $ and $P$ depends non-trivially on each of $x_1,...,x_d$.  For any integer $t$ with $1\leq t\leq d-1$, $n \in \mathbb{N}$ and finite $A\subset \mathbb{R}$ with $|A|=n$, we have
$$
| P(A,\ldots, A)|\gg_{\delta} n^{\frac{3}{2}-\frac{1}{2^{t+1}}},
$$
unless one of the following holds:

(i) $P(x_{1},\ldots,x_{d})=f( u_{1}(x_{1})+\cdots +u_{d}(x_{d}))$ for some nonconstant univariate real polynomials  $f(x)$,$u_1(x),\ldots,u_{d}(x)$. 
Moreover, there exists an index subset $I\subseteq [d]$ with $|I|=d-\lfloor \frac{t-1}{2} \rfloor$ such that for any $i,j\in I$,  $u_{i}(x)\equiv_{a} u_{j}(x)$.

(ii) $P(x_{1},\ldots,x_{d})=f( u_{1}(x_{1})\cdot\ldots\cdot u_{d}(x_{d}))$ for some nonconstant univariate real polynomials  $f(x)$,$u_1(x),\ldots,u_{d}(x)$. Moreover, there exists an index subset $I\subseteq [d]$ with $|I|=d-\lfloor \frac{t-1}{2} \rfloor$ such that for any $i,j\in I$, $u_{i}(x)\equiv_{m} u_{j}(x)$.

\end{theorem}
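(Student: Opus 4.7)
The plan is to combine the Raz--Tov multivariate Elekes--R\'onyai theorem with a new ENR-type sumset lower bound that exploits the $\equiv_a$- (or $\equiv_m$-) equivalence structure of the normal-form factors. First, apply Raz--Tov to $P$: if $P$ is of neither the additive form $f(u_1(x_1)+\cdots+u_d(x_d))$ nor the multiplicative form $f(u_1(x_1)\cdots u_d(x_d))$, then $|P(A,\ldots,A)|\gg_\delta n^{3/2}$, which dominates the target $n^{3/2-1/2^{d-t+2}}$ for every admissible $t$. So I may assume $P$ is in one of these normal forms. I would focus on the additive case and handle the multiplicative case in parallel, passing through $\log|\cdot|$ on sign components to convert products of polynomial images into sums and matching the multiplicative equivalence $\equiv_m$ to a $\mathbb{Q}^+$-scaled variant of $\equiv_a$.

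Second, set $B_i=u_i(A)$. Since $f$ is univariate of degree at most $\delta$, every fiber has at most $\delta$ preimages, so $|P(A,\ldots,A)|=|f(B_1+\cdots+B_d)|\gs\delta^{-1}|B_1+\cdots+B_d|$. The theorem then reduces to the following claim: whenever the largest $\equiv_a$-class among $\{u_1,\ldots,u_d\}$ has size strictly less than $\lceil(d+t)/2\rceil$, we have $|B_1+\cdots+B_d|\gg n^{3/2-1/2^{d-t+2}}$. This is the variation of the Elekes--Nathanson--Ruzsa theorem promised in the abstract.

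Third, I would prove this sumset lower bound iteratively. Choose indices $i,j$ from distinct $\equiv_a$-classes (which exist under the max-class hypothesis) and apply the 2D symmetric Elekes--R\'onyai theorem of Jing--Roy--Tran to seed the bound $|B_i+B_j|\gg n^{5/4}=n^{3/2-1/2^2}$. Then successively absorb the remaining summands $B_k$ via a Cauchy--Schwarz / Szemer\'edi--Trotter bootstrap applied to the algebraic curves $\{u_k(y)+u_\ell(x)=c\}$ with $\ell$ inequivalent to $k$: each bootstrap round halves the gap between the current exponent and $3/2$. Iterating through $d-t$ additional rounds produces the target exponent $3/2-1/2^{d-t+2}$, with the choice of $t$ controlling how many rounds the class structure can actually sustain.

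The main obstacle is making the bootstrap step quantitatively sharp: given a partial sumset $S$ with $|S|\gg n^{\alpha}$ (where $\alpha=3/2-1/2^s$), one must produce $|S+B_k|\gg n^{3/2-1/2^{s+1}}$. This halving is delicate because Szemer\'edi--Trotter must interact cleanly with the multi-summand structure of $S$ rather than just with the simple lattice $A\times A$. A secondary subtlety is that the needed inequivalence at each round must always be available; while the max-class hypothesis guarantees this globally, at each bootstrap step one must select a specific inequivalent partner, and verifying this across every possible class partition---particularly the edge cases where one class is nearly maximal---is the bookkeeping heart of the argument.
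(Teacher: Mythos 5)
Your first two moves are correct and match the paper: invoke Raz--Tov to reduce to the additive (or multiplicative) normal form, then absorb the outer $f$ via Lemma~\ref{lem:size} so that everything rests on the size of $B_1+\cdots+B_d$ with $B_i=u_i(A)$. You also correctly phrase the contrapositive: if the largest $\equiv_a$-class among $\{u_1,\dots,u_d\}$ has size below $\lceil(d+t)/2\rceil$, then $|B_1+\cdots+B_d|$ must be large. But the route you propose for proving that contrapositive has a real gap, and it is precisely the step you yourself flag as ``the main obstacle.''

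The bootstrap ``given $|S|\gg n^\alpha$, produce $|S+B_k|\gg n^{(\text{larger exponent})}$'' is a statement about a one-dimensional sumset, and no version of Szemer\'edi--Trotter or the ENR theorem delivers such an improvement from size information about $S$ alone. Indeed $|S+B_k|$ can be as small as $|S|+|B_k|-1$ when $S$ has arithmetic structure, and nothing in your setup prevents the partial sum $S=B_{i_1}+\cdots+B_{i_m}$ from collapsing (e.g.\ if the involved $u_i$'s are all in one class and $A$ is an AP, $S$ itself can have size $\sim n$). The paper's Theorem~\ref{thm:enr} is a statement about a two-dimensional sumset $S+T$ with $S$ a curve and $T$ a planar set, and the quantity that gets bootstrapped in the paper is the \emph{product} $\bigl|\sum_i u_i(A_i)\bigr|\cdot\bigl|\sum_i v_i(A_i)\bigr|$ coming from the asymmetric setup of Theorem~\ref{thm:main2}, not a single one-dimensional sumset. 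This is why the paper proves Theorem~\ref{thm:main2} first and then \emph{derives} Theorem~\ref{thm:main1} from it: applying Theorem~\ref{thm:main2} to the pair $(P,\sigma(P))$ for each permutation $\sigma\in\mathfrak{S}_d$ supplies the needed second sumset for free (since $P(A,\dots,A)=\sigma(P)(A,\dots,A)$), and Lemma~\ref{lem:poly} converts the conclusion into the statement that for every $\sigma$ there is a $t$-set $I_\sigma$ with $u_{\sigma^{-1}(i)}\equiv_a u_i$ for $i\in I_\sigma$. Your closing worry about ``verifying this across every possible class partition'' is also not hand-waving that can be deferred: the paper handles it with a dedicated combinatorial lemma (Lemma~\ref{lem:counting}, essentially a pigeonhole/cyclic-shift argument on permutations and equivalence classes) which is exactly what produces the threshold $\lceil(d+t)/2\rceil$. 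Without the permutation symmetrization and the counting lemma, your outline does not close.
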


We give some examples of Theorem~\ref{thm:main1}. 
When $d=2$ and $t=1$, Theorem~\ref{thm:main1} is exactly Theorem~1.1 in ~\cite{jing2022semialgebraic}. 
If we keep $t=1$, then Theorem~\ref{thm:main1} gives the same exponent as in ~\cite{jing2022semialgebraic}. 
To obtain an exponent better than $5/4$, we need to impose an additional structure restriction on $P$, namely $|I|=d-\lfloor \frac{t-1}{2}\rfloor$. 
We conjecture that the theorem should remain true with $|I|=d$, but proving this would likely require a different methodology.

The core of our proof approach, also presented as our second theorem later, is a generalization of the Erd\H{o}s--Szemerédi theorem. Erd\H{o}s and Szemer\'{e}di proved in ~\cite{erdos1983sums} that for any finite $A\subseteq \mathbb{Z}$, we have
\[
\max\{|A+A|,|A\cdot A|\}\gg |A|^{1+\varepsilon}
\]
for some $\varepsilon >0$. In the same paper, they brought up the conjecture that $\varepsilon$ can be arbitrarily close to $1$.   The state-of-the-art result is given by ~\cite{bloom2025control}, where $\varepsilon = \frac{1}{3} + \frac{2}{951}$. Progress toward the conjecture involves incidence geometry, building on milestone results by Elekes ~\cite{elekes1997number} and Solymosi ~\cite{solymosi2009bounding} and hence applies to a more general setting where $\mathbb{Z}$ is replaced by $\mathbb{R}$. Our proof for the second theorem will also require incidence geometry.

To generalize the Erd\H{o}s--Szemerédi theorem, one possible direction is to extend the study of $A+A$ and $A\cdot A$ to a more general setting, and there have been many works in this direction ~\cite{elekes2000combinatorial,shen2012algebraic,solymosi2009incidences}. Here, we mention a work by Jing, Roy, and Tran ~\cite{jing2022semialgebraic}. They proved the following result: Let $P,Q\in \mathbb{R}[x,y]\setminus (\mathbb{R}[x]\cup \mathbb{R}[y])$ be two polynomials with degree at most $\delta $, then for all finite $A,B\subset\mathbb{R}$ with $|A|=|B|=n$, \[\max\{|P(A,B)|,|Q(A,B)|\}\gg_{\delta} n^{5/4},\] unless either $$P(x,y)=a_{1}(b_{1}(x)+c_{1}(y)), Q(x,y)=a_{2}(b_{2}(x)+c_{2}(y)),$$ where $a_{i}(x),b_{i}(x),c_{i}(x),i=1,2$ are polynomials over $\mathbb{R}$ and $b_{1}(x)\equiv_{a}b_{2}(x), c_{1}(x)\equiv_{a}c_{2}(x)$, or $$P(x,y)=a_{1}(b_{1}(x)c_{1}(y)), Q(x,y)=a_{2}(b_{2}(x)c_{2}(y)),$$ where $a_{i}(x),b_{i}(x),c_{i}(x),i=1,2$ are polynomials over $\mathbb{R}$ and $b_{1}(x)\equiv_{m}b_{2}(x), c_{1}(x)\equiv_{m}c_{2}(x)$. 
\par Another direction is to generalize the Erd\H{o}s--Szemerédi theorem to higher dimensions. A notable result in this direction was given in ~\cite{elekes2000convexity} using incidence geometry, where it was proved that $\max\{|A^k|,|kA|\}\gg |A|^{\frac{3}{2}-\frac{1}{2^k}}$ for $k\ge 2$. 

Our second theorem generalizes the above results.

\begin{theorem}\label{thm:main2} Let $P(x_{1},\ldots,x_{d}),Q(x_{1},\ldots,x_{d})$ be two polynomials in $ \mathbb{R}[x_{1},\ldots,x_{d}]$  for some $d\geq 2.$ Assume $P,Q$ have degree at most $\delta$ and $P,Q$ depend  non-trivially on each of $x_1,...,x_d$. Let $1\leq t\leq d-1$ be a positive integer. Then for all $n \in \mathbb{N}$ and subsets $A_{1},\ldots, A_{d}$ of $\mathbb{R}$ satisfying \[|A_{i}|=n \text{~for each~} i\in [d],\] we have
$$
\max \{|P(A_{1},\ldots, A_{d})|,|Q(A_{1},\ldots, A_{d})|\} \gg_{\delta} n^{\frac{3}{2}-\frac{1}{2^{t+1}}},
$$
unless one of the following holds:

(i) $P$ and $Q$ form a $t$-additive pair, i.e., \begin{align*}&P(x_{1},\ldots,x_{d})=f( u_{1}(x_{1})+\cdots+ u_{d}(x_{d}))\quad \text{and}\\&Q(x_{1},\ldots,x_{d})=g( v_{1}(x_{1})+\cdots+v_{d}(x_{d})),\end{align*} where $f(x), g(x)$,$u_{1}(x),\ldots,u_{d}(x)$,$v_{1}(x),\ldots,v_{d}(x)$ are nonconstant univariate polynomials over $\mathbb{R}$. Moreover, \(\bigl|\{\,i\in[d]:\ u_i \not\equiv_a v_i\,\}\bigr|<t.\)

(ii) $P$ and $Q$ form a $t$-multiplicative pair, i.e.,  \begin{align*}&P(x_{1},\ldots,x_{d})=f( u_{1}(x_{1})\cdot\ldots\cdot u_{d}(x_{d}))\quad \text{and}\\&Q(x_{1},\ldots,x_{d})=g( v_{1}(x_{1}) \cdot\ldots\cdot v_{d}(x_{d})),\end{align*}
where $f(x), g(x)$,$u_{1}(x),\ldots,u_{d}(x)$,$v_{1}(x),\ldots,v_{d}(x)$  are nonconstant univariate polynomials over $\mathbb{R}$.  Moreover, \(\bigl|\{\,i\in[d]:\ u_i \not\equiv_m v_i\,\}\bigr|<t.\)

\end{theorem}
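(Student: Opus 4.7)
The plan is to first reduce, via the Raz--Tov multivariate Elekes--R\'onyai theorem, to the case where both $P$ and $Q$ already have additive or multiplicative form, and then to extract the rigidity condition by a combinatorial argument driven by the promised variation of Elekes--Nathanson--Ruzsa (ENR). An induction on $d-t$ is used to produce the doubly exponential error term $1/2^{d-t+2}$.

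Suppose toward contradiction that both $|P(A_1,\ldots,A_d)|$ and $|Q(A_1,\ldots,A_d)|$ are $\ll n^{3/2-1/2^{d-t+2}}$. Since this exponent is strictly below $3/2$, applying Raz--Tov to each of $P$ and $Q$ forces them to be of additive form $f(u_1(x_1)+\cdots+u_d(x_d))$ or multiplicative form $f(u_1(x_1)\cdots u_d(x_d))$. I would next rule out the \emph{mixed} case, say $P$ additive and $Q$ multiplicative. Writing $B_i = u_i(A_i)$ and $C_i = v_i(A_i)$, the hypothesis says the iterated sumset $B_1+\cdots+B_d$ and the iterated product $C_1\cdots C_d$ are simultaneously of size $\ll n^{3/2-1/2^{d-t+2}}$. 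Applying the ENR variation to a single coordinate---via the pair $(u_i,v_i)$ on $A_i$---shows that $A_i$ cannot exhibit this much simultaneous additive and multiplicative compression, contradicting $|A_i|=n$; so $P$ and $Q$ must share the same form.

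In the same-form case, say both additive, the bounds $|P(A_1,\ldots,A_d)|\gg_\delta |B_1+\cdots+B_d|$ and $|Q(A_1,\ldots,A_d)|\gg_\delta |C_1+\cdots+C_d|$ reduce the problem to the following additive statement: if $\max(|B_1+\cdots+B_d|,|C_1+\cdots+C_d|)\ll n^{3/2-1/2^{d-t+2}}$, then there exists $I\subseteq[d]$ with $|I|=t$ and $u_i\equiv_a v_i$ for all $i\in I$. I would prove this by induction on $d-t+1$. The base case $t=d$ (exponent $5/4$) forces $u_i\equiv_a v_i$ at every index and can be handled by iterating the two-variable symmetric-pair argument of Jing--Roy--Tran. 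For the inductive step, assume fewer than $t$ indices match; pick an unmatched index $k$. The ENR variation applied to the pair $(u_k,v_k)$ on $A_k$ produces extra growth in $\max(|B_k+B_k|,|C_k+C_k|)$ beyond $n$, and combining this growth with Pl\"unnecke--Ruzsa over the remaining $d-1$ coordinates together with the inductive hypothesis yields the stronger exponent $3/2-1/2^{d-t+2}$. The multiplicative same-form case follows the same template with sums replaced by products; one cannot simply pass to $\log$ because $v_i(A_i)$ need not be positive and $\equiv_m$ is defined via absolute values and rational exponents, so each step must be mirrored on the multiplicative side.

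The main obstacle will be calibrating the ENR variation so that each inductive step buys exactly a doubling of the denominator in the error exponent, and simultaneously tracking an index set $I$ of size exactly $t$ (rather than the easier but weaker ``at least $t-1$ indices match''). A secondary subtlety is verifying that the outer polynomial $f$ (and $g$) costs only a constant factor depending on $\delta$ at every stage of the induction, so that its contribution does not interact badly with the Pl\"unnecke--Ruzsa bookkeeping.
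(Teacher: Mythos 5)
Your reduction via Raz--Tov and the overall plan of inducting on $d-t$ match the paper, but the inductive mechanism you propose is different from, and I believe weaker than, what the paper actually does.

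The paper's inductive step does not pass through $\max(|B_k+B_k|,|C_k+C_k|)$ and Pl\"unnecke--Ruzsa at all. Instead it applies Theorem~\ref{thm:enr} directly with the two-dimensional set
\[
S=\{(u_1(a),v_1(a)):a\in A_1\}\subset\mathbb{R}^2,\qquad
T=\Big(\sum_{i=2}^{d}u_i(A_i)\Big)\times\Big(\sum_{i=2}^{d}v_i(A_i)\Big),
\]
where index $1$ is chosen to be unmatched so that $S$ is not contained in an affine line. Since $|T|\gg_\delta n^{2}$, the ENR bound immediately gives $|S+T|\gg_\delta n^{3/2}|T|^{1/2}$, and since $S+T\subseteq \big(\sum_i u_i(A_i)\big)\times\big(\sum_i v_i(A_i)\big)$, this is a lower bound on the \emph{product} $\big|\sum u_i(A_i)\big|\cdot\big|\sum v_i(A_i)\big|$. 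Applying the inductive hypothesis to $|T|$ and taking the square root is exactly what doubles the denominator in the error exponent. Your route is genuinely different: you want to grow $|B_k+B_k|$ beyond $n^{1+\varepsilon}$ (which does follow from the $d=2$ case) and then push it through Pl\"unnecke--Ruzsa, but $B_k$ occurs only once in $B_1+\cdots+B_d$, and the best Pl\"unnecke--Ruzsa gives you is $|B_k+B_k|\le |B_k+Y|^2/|Y|$ for $Y=\sum_{i\ne k}B_i$. Chasing the exponents through that inequality gives strictly less than $n^{3/2-1/2^{d-t+2}}$; moreover your inductive hypothesis only controls $\max$ of the two $(d-1)$-fold sums, so you cannot force the same side ($B$ or $C$) to be the one carrying the growth. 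This step needs a different idea --- and the paper's idea is precisely to put the ``graph'' set $S$ on one coordinate and the Cartesian product of iterated sumsets as $T$.

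Your reservation about taking logarithms in the multiplicative case is also unfounded: the paper does take logs, but with $\log|x|$ rather than $\log x$. Handling $\log|x|$ cleanly is the technical content of Section~3 (Lemmas~\ref{lem:sze1}, \ref{lem:sze2}, and Theorem~\ref{thm:enr}), which verify the B\'ezout-type intersection bounds for translates of curves parametrized by $\log|p(t)|$ and $\log|q(t)|$ and show that the curve is an affine line exactly when $p\equiv_m q$ for monic $p,q$. With that in hand, the multiplicative and mixed cases are handled by the same ENR inequality, and the mixed case (Lemma~\ref{lem:sum}(iii)) never produces an exceptional structure, which is how the paper dispatches your ``rule out the mixed case'' step.
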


Our proof strategy is inspired by a theorem of Elekes, Nathanson, and Ruzsa ~\cite{elekes2000convexity}, where they proved that 
\begin{fact}\label{fact:enr}
	For any finite $A\subset \mathbb{R}$, define \(S=\{(a,f(a)):\ a\in A\}\)
 where $f$ is a strictly convex/concave function. Then  for any finite set $T\subset\mathbb{R}^2$, we have  
    \[	|S+T|\gg \min\{|A||T|,|A|^{\frac{3}{2}}|T|^{\frac{1}{2}}\} .\]
	\end{fact}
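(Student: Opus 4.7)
The plan is to prove the bound via a Szemer\'edi--Trotter style incidence argument applied to the family of translates of the graph of $f$.

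To each $t=(t_x,t_y)\in T$ I associate the planar curve
\[
\gamma_t := \{(x, f(x-t_x) + t_y) : x \in \mathbb{R}\},
\]
the translate of the graph of $f$ by the vector $t$. A direct computation shows that the points of $S+T$ lying on $\gamma_t$ contain the set $S+t$, which has exactly $|A|$ elements. Setting $P:=S+T$ and $\Gamma:=\{\gamma_t:t\in T\}$, the bipartite incidence count therefore satisfies
\[
I(P,\Gamma)\geq |A|\,|T|.
\]

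Next I would verify that $\Gamma$ behaves like a system of pseudo-lines with respect to $P$, using the strict convexity (equivalently, strict monotonicity of $f'$). First, for distinct $t,t'\in T$, consider the equation $f(x-t_x)+t_y=f(x-t_x')+t_y'$: if $t_x=t_x'$ then $\gamma_t$ and $\gamma_{t'}$ are disjoint parallel translates; if $t_x\neq t_x'$ then the difference $f(x-t_x)-f(x-t_x')$ is strictly monotone in $x$ (its derivative $f'(x-t_x)-f'(x-t_x')$ has constant sign), so there is at most one solution. Hence any two distinct curves in $\Gamma$ meet in at most one point. An entirely parallel argument in the parameter variable $t_x$ shows that given two distinct points of the plane, the equation $y_2-y_1=f(x_2-t_x)-f(x_1-t_x)$ has at most one solution in $t_x$, and $t_y$ is then forced; thus any two distinct points lie on at most one curve of $\Gamma$.

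With these two pseudo-line axioms established, the Szemer\'edi--Trotter theorem (in the pseudo-line form of Pach--Sharir) gives
\[
I(P,\Gamma)\ll (|P|\,|\Gamma|)^{2/3}+|P|+|\Gamma|.
\]
Substituting $|\Gamma|=|T|$ and $|P|=|S+T|$, I get
\[
|A|\,|T|\ll (|S+T|\,|T|)^{2/3}+|S+T|+|T|.
\]
Provided $|A|$ exceeds a fixed absolute constant (the complementary range being trivial), the $|T|$ term is absorbed into the left-hand side. In the regime where the Kovari--Sos--Turan-type term dominates, one extracts $|S+T|\gg |A|^{3/2}|T|^{1/2}$, while in the regime where $|S+T|$ dominates one gets $|S+T|\gg |A|\,|T|$. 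Together these yield $|S+T|\gg \min\{|A|\,|T|,\;|A|^{3/2}|T|^{1/2}\}$, as claimed.

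The heart of the argument is the translation of the strict convexity of $f$ into the two pseudo-line axioms for $\Gamma$; the main potential obstacle is ensuring that the curve family $\Gamma$ falls within the hypotheses of an applicable incidence bound. Since both axioms (two curves share at most one point, two points share at most one curve) hold exactly, the standard pseudo-line version of Szemer\'edi--Trotter applies without any ``cell decomposition for algebraic curves'' machinery, which is what makes the argument clean and robust.
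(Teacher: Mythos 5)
Your proposal is correct and follows essentially the same template the paper uses (and that Elekes--Nathanson--Ruzsa used originally): take $\Pi=S+T$ and the family of translates of the curve, get the trivial incidence lower bound $|A||T|$, verify the two pseudo-line axioms, apply the Pach--Sharir/Szemer\'edi--Trotter bound, and split into the two regimes. The only minor quibble is that you argue monotonicity of $x\mapsto f(x-t_x)-f(x-t_x')$ via $f'$, whereas strict convexity alone (via strictly increasing difference quotients) suffices since a strictly convex function need not be differentiable everywhere; this does not affect the validity of the argument.
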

In our paper, we will prove variations of Fact \ref{fact:enr} by replacing $S$ with different sets.

\begin{theorem}\label{thm:enr}
	Let $p,q\in \mathbb{R}[x]$ be two polynomials with degrees at most $\delta$. For any finite $A\subset \mathbb{R}$, define
	$S=\{(f( p(a)),g(q(a))):a\in A\}$
	where $f,g$ are either $\log(|x|)$ or the identity function, and when a $\log$ appears we interpret $S$ as the set of points for which the expression is defined (i.e.\ we restrict to those $a\in A$ with $p(a)\neq 0$ if $f=\log(|x|)$, and $q(a)\neq 0$ if $g=\log(|x|)$).
	If the curve
\[
C = \{(f(p(t)),\, g(q(t))) : t \in \mathbb{R}\}
\]
(where, when a $\log$ appears, we interpret $C$ as the set of points for which the expression is defined)
is not contained in an affine line,
 then  for any finite set $T\subset\mathbb{R}^2$, we have
 \[
 |S+T|\gg_{\delta } \min\{|A||T|,|A|^{\frac{3}{2}}|T|^{\frac{1}{2}}\} .
 \]
\end{theorem}

Our paper is organized as follows. Section 2 covers the notation and preliminaries. The proof of Theorem \ref{thm:enr} is presented in Section 3. In Section 4, we apply Theorem \ref{thm:enr} to give a new proof of the main results in ~\cite{jing2022semialgebraic}, illustrating how our strategy works and establishing the induction base case. Finally, our main results, Theorem \ref{thm:main1} and Theorem \ref{thm:main2}, are proved in Section 5.

\subsection*{Acknowledgement} The author would like to thank his advisor Yifan Jing for the introduction to the topic and for providing various helpful pieces of advice.

\section{Preliminaries}
\subsection{Notations} This paper will use Vinogradov notation. We write $X\gg Y$ to mean $|Y| \leq C X$ where $C>0$ is some constant. A variation is $X\gg_z Y$, meaning that $|Y| \leq C_z X$ where $C_z>0$ is some constant depending on the parameter $z$.  We use  $X\sim Y$ to denote $X\gg Y$ and $Y\gg X$, and similarly $X\sim_{z} Y$ means that $X\gg_{z} Y$ and $Y\gg_{z} X$.  Given a positive integer $N$, we use $[N]$ to denote the set $\{1, \ldots, N\}$.

\par\noindent
\textbf{Remark on implicit constants.}
Throughout the paper, implicit constants in $\ll,\gg,\sim$ are allowed to depend on the degree $\delta$ and on the ambient dimension $d$
(and on $t$ when it appears), although for brevity we often only indicate $\delta$.

We define two equivalence relations $\equiv_{a}$ and $\equiv_{m}$ for polynomials over $\mathbb{R}[x]$. We write $p(x) \equiv_{a} q(x)$ if there exists a constant $\lambda\in\mathbb R^\times $ such that $p(x)=\lambda q(x)$ for all $x\in\mathbb{R}$. Similarly, we use $p(x) \equiv_{m} q(x)$ if there exists a constant $\kappa\in\mathbb{Q}^{+} $ such that $|p(x)|=|q(x)|^\kappa$ for all $x\in\mathbb{R}$. It is not hard to verify that they are indeed equivalence relations.

\subsection{Algebraic preliminaries}
Our proofs require some tools from algebraic geometry. First, we need the resultant polynomial.

\begin{fact}[~\cite{sendra2008rational}, Theorem 4.39, Theorem 4.21]\label{fact:resultant}
A rational plane curve may be parametrized by
$x=\frac{P(t)}{R(t)}$, $y=\frac{Q(t)}{R(t)}$
with $P,Q,R\in\mathbb R[t]$.
Define
$$F(x,y):=\mathrm{res}_t(xR-P,\;yR-Q)\in\mathbb R[x,y].$$
Then $F\!\left(\frac{P(t)}{R(t)},\frac{Q(t)}{R(t)}\right)=0$ whenever $R(t)\neq 0$.
If $G(x,y)$ denotes an implicit equation of the curve (i.e.\ an irreducible polynomial defining the
Zariski closure of the image), then $G\mid F$ in $\mathbb R[x,y]$.

Moreover, writing $m:=\deg_t(xR-P)=\max\{\deg P,\deg R\}$ and
$n:=\deg_t(yR-Q)=\max\{\deg Q,\deg R\}$, we have the degree bounds
\[
\deg_x F\le n,\qquad \deg_y F\le m,
\]
and in particular $\deg F\le m+n\le 2\max\{\deg P,\deg Q,\deg R\}$.
\end{fact}

Then we have the following result for algebraic curves.

\begin{lemma}\label{lem:curve}
Let $p(t),q(t)\in\mathbb R[t]$ with $\deg p,\deg q\le \delta$ and not both constant.
Set $$C=\{(p(t),q(t)):\ t\in\mathbb R\}\subset\mathbb R^2.$$
Then there exists an irreducible polynomial $f(x,y)\in\mathbb R[x,y]$ with $\deg f\leq 2\delta$
such that $f(p(t),q(t))\equiv 0$. In particular $C$ is contained in (part of) an irreducible
algebraic curve of degree $\leq 2\delta$.
\end{lemma}

\begin{proof}
Let $m=\deg p$ and $n=\deg q$, so $m,n\le \delta$ and $\max\{m,n\}\ge 1$.
Define
\[
F(x,y):=\mathrm{Res}_{u}\bigl(x-p(u),\,y-q(u)\bigr)\in\mathbb R[x,y].
\]
Then $F\neq 0$, and by Fact~\ref{fact:resultant} we have the degree bound
\[
\deg F\le m+n\le 2\delta.
\]

For any $s\in\mathbb R$, the two polynomials in $\mathbb R[u]$
\[
p(s)-p(u)\qquad\text{and}\qquad q(s)-q(u)
\]
have a common root at $u=s$. Hence their resultant vanishes, i.e.
\[
F\bigl(p(s),q(s)\bigr)
=\mathrm{Res}_{u}\bigl(p(s)-p(u),\,q(s)-q(u)\bigr)=0.
\]
Therefore the univariate polynomial $t\mapsto F(p(t),q(t))\in\mathbb R[t]$
vanishes on all $t\in\mathbb R$, and hence
\[
F(p(t),q(t))\equiv 0 \quad \text{in }\mathbb R[t].
\]

Factor $F=\prod_{j=1}^k f_j^{e_j}$ into irreducibles in $\mathbb R[x,y]$.
Note $(x,y)=(p(t),q(t))$, in $\mathbb R[t]$:
\[
0\equiv F(p(t),q(t))=\prod_{j=1}^k f_j(p(t),q(t))^{e_j}.
\]
Since $\mathbb R[t]$ is an integral domain, there exists $j_0$ such that
$f_{j_0}(p(t),q(t))\equiv 0$. Let $f:=f_{j_0}$. Then $f$ is irreducible,
$f(p(t),q(t))\equiv 0$, and
\[
\deg f\le \deg F\le 2\delta.
\]
Thus $C\subset Z(f)(\mathbb R)$, as claimed.
\end{proof}

We also make extensive use of the classical B\'{e}zout's theorem.

\begin{fact}[~\cite{sendra2008rational}, Theorem 2.46]\label{fact:bezout}
	 Let $p$ and $q$ be two bivariate polynomials over $\mathbb{R}$, with degrees $d_p$ and $d_q$, respectively. If $p$ and $q$ vanish simultaneously at more than $d_p d_q$ points of $\mathbb{R}^2$, then $p$ and $q$ have a common non-trivial factor.
\end{fact}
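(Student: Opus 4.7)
The plan is to prove the contrapositive using the Sylvester resultant already invoked in Fact \ref{fact:resultant}. Assume $p, q \in \mathbb{R}[x,y]$ share no non-trivial common factor; I will show that $V := \{(a,b) \in \mathbb{R}^2 : p(a,b) = q(a,b) = 0\}$ has at most $d_p d_q$ elements. As a preliminary, $V$ is finite: coprimality in $\mathbb{R}[x,y]$ lifts to coprimality in $\mathbb{C}[x,y]$ (any non-real common factor $h$ is either associate to a real polynomial, or else $h\bar h \in \mathbb{R}[x,y]$ is itself a common factor of $p,q$), and the complex zero set $V_{\mathbb{C}}(p)\cap V_{\mathbb{C}}(q)$ is a proper algebraic subset of each irreducible component of $V_{\mathbb{C}}(p)$ (each of pure dimension one), so has dimension zero by the affine dimension theorem.

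With $V$ finite, I would apply a generic linear change of coordinates $(x,y)\mapsto (x+\alpha y,\,y)$ for $\alpha\in\mathbb{R}$: for all but finitely many $\alpha$, (a) any two distinct points of $V$ acquire distinct first coordinates, and (b) viewed in $\mathbb{R}[x][y]$, the leading $y$-coefficients of the transformed $p$ and $q$ become nonzero constants in $\mathbb{R}$. Condition (b) reduces to $p^{*}(1,\alpha)\neq 0$ and $q^{*}(1,\alpha)\neq 0$, where $p^{*},q^{*}$ are the top-degree homogeneous components of $p,q$, which excludes only finitely many $\alpha$. Fix such an $\alpha$ and form $R(x):=\mathrm{res}_y\bigl(p(x,y),\,q(x,y)\bigr)\in\mathbb{R}[x]$. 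Standard resultant theory (cf.\ \cite{cox2005using}) gives $\deg R\le d_p d_q$, and $R\not\equiv 0$ by Gauss's lemma: coprimality in the UFD $\mathbb{R}[x,y]=\mathbb{R}[x][y]$ is promoted to coprimality in the PID $\mathbb{R}(x)[y]$, which is equivalent to $R\ne 0$. For any $(a,b)\in V$ the univariate polynomials $p(a,y)$ and $q(a,y)$ share the root $y=b$, and since their leading coefficients are nonzero constants by (b), this forces $R(a)=0$. By (a), distinct points of $V$ project to distinct $x$-coordinates, so
\[
|V|\le |\{a\in\mathbb{R}:R(a)=0\}|\le \deg R\le d_p d_q,
\]
which is the desired bound.

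The main obstacle is the bookkeeping around the resultant's degenerations: without preparation, $R$ can vanish identically (when the $y$-leading coefficients of $p,q$ share a common $x$-factor), distinct common zeros can collide under the projection $(a,b)\mapsto a$, and specialising at $x=a$ can drop the $y$-degree and yield a spurious vanishing of $R$. The generic linear substitution in the second paragraph absorbs all three pathologies simultaneously. A smaller but essential point is the invocation of Gauss's lemma to shuttle between coprimality in the UFD $\mathbb{R}[x,y]$ (where the hypothesis is stated) and in the PID $\mathbb{R}(x)[y]$ (where the resultant non-vanishing criterion is naturally phrased).
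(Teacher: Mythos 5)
The paper does not prove this statement at all: it is quoted as a known form of B\'ezout's theorem with a citation to \cite{cox2005using}, so there is no in-paper argument to compare against. Your resultant proof of the contrapositive is the standard one and is essentially correct: a generic shear makes the leading $y$-coefficients nonzero constants (so specialization at $x=a$ commutes with the resultant and no degree drop occurs), Gauss's lemma promotes coprimality in $\mathbb{R}[x][y]$ to $\mathbb{R}(x)[y]$ so $R=\mathrm{res}_y(p,q)\not\equiv 0$, the total-degree bound $\deg R\le d_p d_q$ is standard, and injectivity of the projection on the finite set $V$ gives $|V|\le d_p d_q$. Two small blemishes, neither fatal: (1) with the shear $(x,y)\mapsto(x+\alpha y,y)$ the non-degeneracy condition is $p^{*}(\alpha,1)\neq 0$ rather than $p^{*}(1,\alpha)\neq 0$ (your version corresponds to shearing the other variable); either way it excludes only finitely many $\alpha$, and you should also say explicitly that the invertible linear substitution preserves total degrees, coprimality, and the cardinality of $V$. (2) The finiteness of $V$ is not a consequence of the affine dimension theorem, which only bounds the dimension of intersection components from below; the correct justification is that $q$ does not vanish identically on any irreducible component of $V_{\mathbb{C}}(p)$ (else the corresponding irreducible factor of $p$ would divide $q$), and a proper Zariski-closed subset of an irreducible affine curve is finite --- or, more economically, you can get finiteness from the same resultant computation performed with respect to each variable, avoiding dimension theory altogether.
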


\subsection{Combinatorial preliminaries}

Our proof of Theorem \ref{thm:enr} is based on incidence geometry, which requires a special version of the Szemer\'{e}di–Trotter theorem. Given a set of points $ \Pi \text { and the set of curves } \Gamma$, define the number of incidences to be 
$$\mathcal{I}(\Pi, \Gamma):=|\{(p, \ell) \in \Pi\times \Gamma: p \in \ell\}.$$
\begin{fact}[Szemer\'{e}di-Trotter theorem for curves,~\cite{pach1998number}]\label{fact:sze} Let $\Gamma$ be a set of simple curves in the plane. Suppose that each pair of curves from $\Gamma$ intersect in $\ll_{\delta} 1$ points. Let $\Pi \subset \mathbb{R}^2$ be a set of points. Suppose that for each pair of distinct points $p, p^{\prime} \in \Pi$, there are $\ll_{\delta} 1$ curves from $\Gamma$ containing both $p$ and $p^{\prime}$. Then
$$
\mathcal{I}(\Pi, \Gamma) \ll_{\delta }|\Pi|^{2 / 3}|\Gamma|^{2 / 3}+|\Pi|+|\Gamma|.
$$

\end{fact}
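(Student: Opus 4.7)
The plan is to adapt Sz\'ekely's crossing-lemma proof of the Szemer\'edi--Trotter theorem for lines to the curve setting, which is the route taken by Pach and Sharir. The core idea is to encode the incidence structure as a topological multigraph drawn in the plane, and then to extract an upper bound on its edge count from a lower bound on its crossing number, the latter being controlled by the pairwise intersection assumption on $\Gamma$.

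Concretely, I would first build a multigraph $G$ with vertex set $V = \Pi$ as follows. For each curve $\gamma \in \Gamma$ incident to $k_\gamma \geq 2$ points of $\Pi$, list those points in the order $p_1^\gamma, \ldots, p_{k_\gamma}^\gamma$ in which they appear along $\gamma$ (using that $\gamma$ is simple), and insert the edges $\{p_i^\gamma, p_{i+1}^\gamma\}$ for $1 \leq i < k_\gamma$, each drawn as the corresponding sub-arc of $\gamma$. Counting with multiplicity, this gives
\[
|E(G)| \;=\; \sum_{\gamma \in \Gamma} \max(k_\gamma - 1, 0) \;\geq\; \mathcal{I}(\Pi, \Gamma) - |\Gamma|.
\]
Every crossing in this drawing arises from a genuine intersection of two distinct curves of $\Gamma$ at a non-endpoint, so since any two curves meet in $\ll_\delta 1$ points the total crossing number satisfies $\operatorname{cr}(G) \ll_\delta |\Gamma|^2$. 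The second hypothesis, that any two distinct points of $\Pi$ lie on $\ll_\delta 1$ curves of $\Gamma$, means that the edge multiplicity of $G$ is bounded by some constant $m \ll_\delta 1$.

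Next I would invoke the multigraph crossing lemma: for any topologically drawn multigraph with $V$ vertices, $E$ edges, and maximum edge multiplicity $m$, if $E \geq C m V$ for some absolute constant $C$, then $\operatorname{cr}(G) \gg E^3 / (m V^2)$. If $|E(G)| < C m |\Pi|$, then $\mathcal{I}(\Pi, \Gamma) \leq |E(G)| + |\Gamma| \ll_\delta |\Pi| + |\Gamma|$, which supplies the two linear terms. Otherwise, combining the crossing lemma with $\operatorname{cr}(G) \ll_\delta |\Gamma|^2$ yields $|E(G)|^3 \ll_\delta m\, |\Pi|^2 |\Gamma|^2$, and hence $\mathcal{I}(\Pi, \Gamma) \leq |E(G)| + |\Gamma| \ll_\delta |\Pi|^{2/3}|\Gamma|^{2/3} + |\Gamma|$. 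Adding the two cases produces the claimed incidence bound.

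The main obstacle is establishing and correctly applying the multigraph form of the crossing lemma: the simple-graph version is classical via Euler's formula plus random edge deletion, and the extension to multigraphs follows by a further random-sampling step that thins each edge class down to multiplicity one, losing at most a factor of $m$ in the bound. A secondary subtlety is verifying that the arc-along-curve construction really defines a valid topological drawing and that each crossing in the drawing corresponds (up to a $\delta$-dependent constant) to a transverse meeting of two curves; the simplicity of the $\gamma \in \Gamma$ together with the bound on pairwise intersection numbers handles both issues, with any tangential contacts already absorbed into the $\ll_\delta 1$ intersection count.
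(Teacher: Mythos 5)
The paper does not prove this statement at all: Fact~\ref{fact:sze} is imported as a black box from Pach--Sharir \cite{pach1998number}, so there is no internal proof to compare against. Your proposal is essentially the standard proof of that cited result, namely Sz\'ekely's crossing-number argument: the arc-multigraph on $\Pi$ with $|E|\geq \mathcal{I}(\Pi,\Gamma)-|\Gamma|$, the bound $\operatorname{cr}(G)\ll_\delta |\Gamma|^2$ from the pairwise-intersection hypothesis, the edge-multiplicity bound $m\ll_\delta 1$ from the two-points-on-few-curves hypothesis, and the multigraph crossing lemma $\operatorname{cr}(G)\gg E^3/(mV^2)$ for $E\gg mV$. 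The outline is correct, and you correctly isolate the two points that need care: the multigraph form of the crossing lemma (the probabilistic thinning that loses only a factor of $m$ rather than $m^3$), and the legitimacy of the drawing (simplicity of each curve prevents self-crossings of arcs from the same curve, the finite pairwise intersection count prevents shared sub-arcs, and degenerate crossings or edges passing through vertices are handled by the usual small perturbation). So your route is not an alternative to the paper's argument but a correct reconstruction of the proof behind its citation.
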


For the proofs of Theorems \ref{thm:main1} and \ref{thm:main2}, we require the following established Elekes--R\'{o}nyai theorem, proved by Solymosi and Zahl ~\cite{solymosi2024improved} for $d=2$, and by Raz and Shem-Tov ~\cite{raz2020expanding}  for $d>2$.

\begin{fact}[~\cite{raz2020expanding,solymosi2024improved}]\label{thm:er}
Let $d\ge 2$ and let $F\in\mathbb R[x_1,\ldots,x_d]$ have $\deg F=\delta$ and depend non-trivially on each variable.
Then for any $A_1,\ldots, A_d\subset \mathbb{R}$ with $|A_1|=\cdots=|A_d|=n $, we have either
\[ 
|F(A_1,\ldots,A_d)|\gg_\delta n^{3/2},
\]
 or $F$ is of the form
\[
F(x_1,\ldots,x_d)=f\big(u_1(x_1)+\cdots+u_d(x_d)\big)
\quad\text{or}\quad
F(x_1,\ldots,x_d)=f\big(u_1(x_1)\cdots u_d(x_d)\big),
\]
for some nonconstant univariate polynomials $f,u_1,\ldots,u_d\in\mathbb R[x]$.
\end{fact}

\section{Proof of Theorem \ref{thm:enr} }
In this section, we prove Theorem \ref{thm:enr}.

\begin{lemma}\label{lem:translation}
Let $C\subset \mathbb{A}^2_{\mathbb{R}}$ be an irreducible affine algebraic curve with $C(\mathbb{R})\neq\varnothing$.
Suppose there exists a nonzero vector $\mathbf a\in \mathbb{R}^2$ such that
\[
C+\mathbf a := \{x+\mathbf a:\ x\in C\}=C .
\]
Then $C$ is an affine line.
\end{lemma}

\begin{proof}
Since $C$ is an irreducible affine algebraic curve, there exists an irreducible polynomial
$F(x,y)\in \mathbb{R}[x,y]$ such that $C=Z(F)$.

Fix a point $z_0\in C$. The translation invariance $C+\mathbf a=C$ implies
$z_0+n\mathbf a\in C$ for all integers $n\in\mathbb{Z}$.
Let $L$ be the affine line
\[
L:=\{z_0+t\mathbf a:\ t\in \mathbb{R}\}\subset \mathbb{A}^2_{\mathbb{R}}.
\]
Consider the univariate polynomial
\[
h(t):=F(z_0+t\mathbf a)\in \mathbb{R}[t].
\]
For every $n\in\mathbb{Z}$ we have $z_0+n\mathbf a\in C=Z(F)$, hence $h(n)=0$ for infinitely many integers $n$.
Therefore $h\equiv 0$, so $F$ vanishes identically on $L$, i.e.\ $L\subset Z(F)=C$.

Let $\ell(x,y)$ be a nonzero linear polynomial defining the line $L$ (so $L=Z(\ell)$).
Since $F$ vanishes on $L$, we have $\ell\mid F$ in $\mathbb{R}[x,y]$. As $F$ is irreducible, it follows that
$F$ is a scalar multiple of $\ell$, hence $\deg F=1$ and $C=Z(F)=L$ is an affine line.
\end{proof}
Next, we will prove two variations of \ref{lem:translation}
\begin{lemma}\label{lem:mult_translation}
Let $C\subset \mathbb{A}^2_{\mathbb{R}}$ be an irreducible affine algebraic curve and assume
$C\cap (\mathbb{R}^{\times})^2\neq\varnothing$.
Suppose there exists $(\alpha,\beta)\in(\mathbb{R}^{\times})^2$ of infinite order such that
\[
(\alpha,\beta)\cdot C := \{(\alpha x,\beta y):\ (x,y)\in C\}=C .
\]
Then there exist integers $(m,n)\in\mathbb{Z}^2\setminus\{(0,0)\}$ and a constant $r\in\mathbb{R}^{\times}$ such that
\[
C\cap (\mathbb{R}^{\times})^2 \subset \{(x,y)\in(\mathbb{R}^{\times})^2:\ x^m y^n=r\}.
\]
In particular, the set $\{(\log|x|,\log|y|):\ (x,y)\in C\cap(\mathbb{R}^{\times})^2\}$ is contained in an affine line.
\end{lemma}

\begin{proof}
Since $C$ is an irreducible affine algebraic curve, there exists an irreducible polynomial
$F(x,y)\in \mathbb{R}[x,y]$ such that $C=Z(F)$.

Define $G(x,y):=F(\alpha x,\beta y)\in\mathbb{R}[x,y]$.
The map $(x,y)\mapsto(\alpha x,\beta y)$ is an automorphism of $\mathbb{A}^2_{\mathbb{R}}$ with inverse
$(x,y)\mapsto(\alpha^{-1}x,\beta^{-1}y)$.
Hence the hypothesis $(\alpha,\beta)\cdot C=C$ implies
\[
Z(F)=C=(\alpha,\beta)\cdot C=Z(G).
\]
Both $F$ and $G$ are irreducible, and two irreducible polynomials defining the same algebraic curve
must differ by a nonzero scalar. Thus there exists $c\in\mathbb{R}^{\times}$ such that
\[
F(\alpha x,\beta y)=c\,F(x,y).
\]

Write $F(x,y)=\sum_{(i,j)\in S} a_{ij}x^iy^j$ with $a_{ij}\neq 0$ and finite support $S\subset\mathbb{Z}_{\ge 0}^2$.
Comparing coefficients in the identity $F(\alpha x,\beta y)=cF(x,y)$ gives
\[
a_{ij}\alpha^i\beta^j=c\,a_{ij}\qquad\text{for all }(i,j)\in S,
\]
hence
\[
\alpha^i\beta^j=c\qquad\text{for all }(i,j)\in S.
\]
Since $C\cap(\mathbb R^\times)^2\neq\varnothing$, the irreducible $F$ is not divisible by $x$ or $y$,
hence the support $S$ contains at least two distinct pairs $(i,j)$. Thus,
\[
\alpha^{m}\beta^{n}=1,\qquad (m,n):=(i_1-i_2,\;j_1-j_2)\neq(0,0).
\]

Fix one $(i_0,j_0)\in S$. For any $(i,j)\in S$ we have $\alpha^{i-i_0}\beta^{j-j_0}=1$, so
$(i-i_0,j-j_0)$ lies in the lattice
\[
L:=\{(u,v)\in\mathbb{Z}^2:\ \alpha^u\beta^v=1\}.
\]
Because $S$ has at least two elements, $L\neq\{(0,0)\}$. Since $(\alpha,\beta)$ has infinite order in $(\mathbb R^\times)^2$, the image
\[
\{\alpha^u\beta^v:\ (u,v)\in\mathbb Z^2\}\subset \mathbb R^\times
\]
is infinite, so the kernel $L=\{(u,v)\in\mathbb Z^2:\ \alpha^u\beta^v=1\}$ cannot have rank $2$ (otherwise the image would be finite, forcing $\alpha,\beta\in\{\pm1\}$ and hence $(\alpha,\beta)$ to have finite order). Therefore $\mathrm{rank}(L)=1$, and we may choose a primitive generator $(m,n)\in\mathbb{Z}^2\setminus\{(0,0)\}$ of $L$.

Then every $(i,j)\in S$ is of the form $(i,j)=(i_0,j_0)+t(m,n)$ for some $t\in\mathbb{Z}$.

Now work on the torus $U:=(\mathbb{R}^{\times})^2$. In the Laurent ring $\mathbb{R}[x^{\pm1},y^{\pm1}]$ we can factor
\[
x^{-i_0}y^{-j_0}F(x,y)=\sum_{t\in T} b_t\,(x^my^n)^t
\]
for some finite $T\subset\mathbb{Z}$ and coefficients $b_t\in\mathbb{R}^{\times}$.
Let $t_{\min}:=\min T$ and set
\[
P(z):=\sum_{t\in T} b_t\,z^{\,t-t_{\min}}\in\mathbb{R}[z]\setminus\{0\}.
\]
Then for every $(x,y)\in U$ we have $F(x,y)=0$ if and only if $P(x^my^n)=0$
(because the prefactor $x^{i_0}y^{j_0}(x^my^n)^{t_{\min}}$ is nonzero on $U$).

Set $C^{\times}:=C\cap U$. Since $U$ is Zariski open in $\mathbb{A}^2_{\mathbb{R}}$,
the set $C^{\times}$ is a nonempty Zariski open subset of the irreducible curve $C$, hence $C^{\times}$ is irreducible.
For any $(x,y)\in C^{\times}$ the number $x^my^n$ is real and nonzero, so $P(x^my^n)=0$ implies
$x^my^n$ is a (real, nonzero) root of $P$. Let $r_1,\dots,r_s\in\mathbb{R}^{\times}$ be the distinct real roots of $P$.
Then
\[
C^{\times}\subset \bigcup_{k=1}^s \{(x,y)\in U:\ x^my^n=r_k\}.
\]
Each set $\{x^my^n=r_k\}$ is Zariski closed in $U$ (if some exponent is negative, multiply the equation by a suitable
monomial to clear denominators). Since $C^{\times}$ is irreducible and contained in a finite union of closed sets,
it must be contained in one of them. Therefore there exists $r\in\mathbb{R}^{\times}$ such that
\[
C^{\times}=C\cap(\mathbb{R}^{\times})^2 \subset \{(x,y)\in(\mathbb{R}^{\times})^2:\ x^my^n=r\}.
\]

Finally, for any $(x,y)\in C^{\times}$ we have $x^my^n=r$, hence
$m\log|x|+n\log|y|=\log|r|$,
so $\{(\log|x|,\log|y|): (x,y)\in C^{\times}\}$ lies in the affine line
$mX+nY=\log|r|$ in $\mathbb{R}^2$.
\end{proof}

\begin{lemma}\label{lem:skew_affine}
Let $D\subset \mathbb A^2_{\mathbb R}$ be an irreducible affine algebraic curve such that
$D(\mathbb R)\cap(\mathbb R^\times)^2\neq\varnothing$.
Let $l\in\mathbb R^\times$ and let $\lambda\in\mathbb R^\times$ be of infinite multiplicative order, and set
\[
A(x,y)=(x+l,\lambda y).
\]
Then $A(D)\neq D$.
More precisely, if $A(D)=D$ then necessarily $D=\{y=0\}$.
\end{lemma}

\begin{proof}
Let $F\in\mathbb R[x,y]$ be irreducible with $D=Z(F)$.
Assume $A(D)=D$ for $A(x,y)=(x+l,\lambda y)$, where $l\neq 0$ and $\lambda$ has infinite multiplicative order.

Since $A(D)=D$, the polynomial $F\circ A$ vanishes on $D$, hence $F$ divides $F\circ A$ in $\mathbb R[x,y]$.
Applying the same argument to $A^{-1}$ gives $F\circ A$ divides $F$.
Thus $F\circ A=cF$ for some $c\in\mathbb R^\times$, i.e.
\begin{equation}\label{eq:assoc2}
F(x+l,\lambda y)=c\,F(x,y).
\end{equation}

Write $d=\deg_xF$ and
\[
F(x,y)=\sum_{i=0}^d f_i(y)x^i,\qquad f_d\neq 0.
\]
Comparing $x^d$-coefficients in \eqref{eq:assoc2} yields $f_d(\lambda y)=c f_d(y)$.
Writing $f_d(y)=\sum_j b_jy^j$, we get $b_j\lambda^j=cb_j$ for all $j$.
Since $\lambda$ has infinite order, the numbers $\lambda^j$ are pairwise distinct, so $f_d(y)=ay^m$ for some
$a\in\mathbb R^\times$ and $c=\lambda^m$.

If $d\ge 1$, comparing $x^{d-1}$-coefficients in \eqref{eq:assoc2} gives
\[
f_{d-1}(\lambda y)+dl\,f_d(\lambda y)=c f_{d-1}(y),
\]
hence (using $f_d(\lambda y)=a\lambda^my^m$ and $c=\lambda^m$)
\[
f_{d-1}(\lambda y)-\lambda^m f_{d-1}(y)=-dl\,a\,\lambda^m y^m.
\]
But the left-hand side has zero $y^m$-coefficient (it is $\sum_j u_j(\lambda^j-\lambda^m)y^j$),
while the right-hand side has $y^m$-coefficient $-dl\,a\,\lambda^m\neq 0$, contradiction.
Therefore $d=0$, so $F\in\mathbb R[y]$.

Since $D(\mathbb R)\cap(\mathbb R^\times)^2\neq\varnothing$, there is a real point on $D$ with $y\neq 0$,
so $F$ has a nonzero real root. Note $F$ is irreducible in $\mathbb R[y]$, it must be linear:
$F(y)=\alpha(y-y_0)$ with $y_0\in\mathbb R^\times$.
Then $D=\{y=y_0\}$, and $A(D)=D$ forces $y_0=\lambda y_0$.
Since $\lambda$ has infinite order, $\lambda\neq 1$, hence $y_0=0$, contradiction.
Thus $A(D)\neq D$, and in fact the only invariant irreducible curve is $\{y=0\}$.
\end{proof}

Next, we extend Fact \ref{fact:enr} to curves parametrized using logarithmic functions, i.e., curves of the form  $C = \{(f(p(t)),\, g(q(t))): t \in \mathbb{R}\}$ where $f$ or $g$ is $\log(|x|)$ or the identity map.

\begin{lemma}\label{lem:sze1}
Let $p(x),q(x)\in\mathbb R[x]$ be polynomials of degree at most $\delta$.
Let $f,g$ be functions where each is either $\log(|x|)$ or the identity map.
Define
$C=\{(f(p(t)),\,g(q(t))): t\in\mathbb R\}$,
where if a $\log$ appears we interpret $C$ as the set of points for which the expression is defined.
Assume that $C$ is not contained in an affine line.
Then for any nonzero translation vector $\mathbf a\in\mathbb R^2$ we have $|(C+\mathbf a)\cap C|\le 16\delta^2$.
\end{lemma}

\begin{proof}
Let $D\subset\mathbb A^2_{\mathbb C}$ be the Zariski closure of the image of $\phi(t)=(p(t),q(t))$.
Then $D$ is an irreducible affine algebraic curve, defined over $\mathbb R$, and by Lemma~\ref{lem:curve} we have
\[
\deg D\le 2\delta.
\]
Fix $\mathbf a=(l_x,l_y)\neq(0,0)$.

\smallskip\noindent
\emph{Case 1: $f=g=\mathrm{id}$.}
Then $C\subset D(\mathbb R)$ and $C+\mathbf a\subset (D+\mathbf a)(\mathbb R)$, hence
\[
|(C+\mathbf a)\cap C|\le |(D+\mathbf a)\cap D|.
\]
If $D+\mathbf a=D$, then Lemma~\ref{lem:translation} implies that $D$ is an affine line, so $C$ is contained in an affine line,
contradicting the assumption.
Thus $D+\mathbf a\neq D$, and Fact~\ref{fact:bezout} gives
\[
|(D+\mathbf a)\cap D|\le (\deg D)^2\le (2\delta)^2=4\delta^2.
\]
Hence $|(C+\mathbf a)\cap C|\le 4\delta^2$.

\smallskip\noindent
\emph{Case 2: $f=g=\log(|\cdot|)$.}
Here
\[
C=\{(\log|p(t)|,\log|q(t)|): t\in\mathbb R,\ p(t)q(t)\neq 0\}.
\]
Since $C$ is not contained in an affine line, in particular $C\neq\varnothing$.
Set $P(t)=p(t)^2$ and $Q(t)=q(t)^2$, and define
\[
C'=\{(\log P(t),\log Q(t)):\ t\in\mathbb R,\ P(t)Q(t)\neq 0\}.
\]
Since $(\log P(t),\log Q(t))=(2\log|p(t)|,2\log|q(t)|)$, the map $(u,v)\mapsto (u/2,v/2)$ is a bijection $C'\to C$. Thus,
\[
|(C+\mathbf a)\cap C|=|(C'+2\mathbf a)\cap C'|.
\]

Write $\mathbf a=(l_x,l_y)$ and set $\alpha=e^{l_x}>0$, $\beta=e^{l_y}>0$.
Let $\psi(t)=(P(t),Q(t))$ and let $D'\subset\mathbb A^2_{\mathbb C}$ be the Zariski closure of $\psi(\mathbb A^1_{\mathbb C})$.
Then $D'$ is an irreducible affine algebraic curve defined over $\mathbb R$, and by Lemma~\ref{lem:curve} applied to $P,Q$
we have
\[
\deg D'\le 2\max\{\deg P,\deg Q\}\le 4\delta.
\]
Define the linear map $T(x,y)=(\alpha^2 x,\beta^2 y)$ and the curve $D'_T=T(D')$.

If $(u,v)\in (C'+2\mathbf a)\cap C'$, then there exist $t,s\in\mathbb R$ with $P(t)Q(t)P(s)Q(s)\neq 0$ such that
\[
u=\log P(t)=\log P(s)+2l_x
\quad\text{and}\quad
v=\log Q(t)=\log Q(s)+2l_y.
\]
Equivalently, $P(t)=\alpha^2P(s)$ and $Q(t)=\beta^2Q(s)$, i.e.\ $\psi(t)=T(\psi(s))$.
Thus
\[
(e^u,e^v)=(P(t),Q(t))\in D'\cap D'_T.
\]
Since $(u,v)\mapsto (e^u,e^v)$ is injective on $\mathbb R^2$, we get
\[
|(C'+2\mathbf a)\cap C'|\le |D'\cap D'_T|.
\]

If $D'_T\neq D'$, then Fact~\ref{fact:bezout} gives
\[
|D'\cap D'_T|\le (\deg D')^2\le (4\delta)^2=16\delta^2,
\]
and we are done.

It remains to rule out $D'_T=D'$.
Because $\mathbf a\neq 0$, we have $(\alpha^2,\beta^2)\neq(1,1)$. Moreover $\alpha^2,\beta^2>0$, hence $(\alpha^2,\beta^2)$ has infinite order.
Also $C'\neq\varnothing$ implies that there exists $t\in\mathbb R$ with $P(t)Q(t)\neq0$, so $(P(t),Q(t))\in D'\cap(\mathbb R^\times)^2$.
Hence Lemma~\ref{lem:mult_translation} applies to $C=D'$ with $(\alpha,\beta)$ replaced by $(\alpha^2,\beta^2)$, giving integers $(m,n)\neq(0,0)$ and $r\in\mathbb R^\times$ such that
\[
D'\cap(\mathbb R^\times)^2\subset\{(x,y)\in(\mathbb R^\times)^2:\ x^m y^n=r\}.
\]
On $C'$ this implies $m\log P(t)+n\log Q(t)=\log|r|$, so $C'$ (hence $C$) is contained in an affine line,
contradicting the standing assumption that $C$ is not contained in an affine line.
Therefore $D'_T\neq D'$, and we conclude
\[
|(C+\mathbf a)\cap C|=|(C'+2\mathbf a)\cap C'|\le 16\delta^2.
\]

\smallskip\noindent
\emph{Case 3: exactly one of $f,g$ is $\log(|\cdot|)$.}
By symmetry it suffices to treat $f=\mathrm{id}$ and $g=\log(|\cdot|)$.
Here
\[
C=\{(p(t),\log|q(t)|): t\in\mathbb R,\ q(t)\neq 0\}.
\]
Let $\beta=e^{l_y}>0$.

Since $C\neq\varnothing$, there exists $t$ with $q(t)\neq 0$.
Since $p$ is nonconstant, it cannot vanish on all such $t$, hence there exists $t$ with $p(t)q(t)\neq 0$.
Thus $D\cap(\mathbb R^\times)^2\neq\varnothing$.

If $(u,v)\in (C+\mathbf a)\cap C$, then there exist $t,s$ with $q(t)q(s)\neq 0$ such that
\[
u=p(t)=p(s)+l_x
\quad\text{and}\quad
v=\log|q(t)|=\log|q(s)|+l_y,
\]
so $|q(t)|=\beta|q(s)|$ and hence $q(t)=\varepsilon\beta q(s)$ for some $\varepsilon\in\{\pm 1\}$.
Define $A_\varepsilon(x,y)=(x+l_x,\varepsilon\beta y)$ and $D_\varepsilon=A_\varepsilon(D)$.
Then $(p(t),q(t))\in D\cap D_\varepsilon$, and as in the original argument we get
\[
|(C+\mathbf a)\cap C|\le \sum_{\varepsilon\in\{\pm 1\}} |D\cap D_\varepsilon|.
\]

If $D_\varepsilon\neq D$, then Fact~\ref{fact:bezout} gives
\[
|D\cap D_\varepsilon|\le (\deg D)^2\le (2\delta)^2=4\delta^2,
\]
and hence $|(C+\mathbf a)\cap C|\le 8\delta^2\le 16\delta^2$.

It remains to rule out $D_\varepsilon=D$.
Assume for contradiction that $D_\varepsilon=D$ for some $\varepsilon\in\{\pm1\}$.

If $l_y=0$, then $\beta=e^{l_y}=1$ and $A_\varepsilon^2(x,y)=(x+2l_x,y)$, so $D+(2l_x,0)=D$.
By Lemma~\ref{lem:translation}, $D$ is an affine line. Moreover, since $D$ is invariant under a nontrivial horizontal translation, this line must be horizontal, i.e.\ $D=\{y=y_0\}$ for some $y_0\in\mathbb R$ (equivalently, the defining linear polynomial has no $x$-term).
In particular, $q(t)\equiv y_0$ is constant. Since $C\neq\varnothing$, we have $y_0\neq 0$, and then
\[
C=\{(p(t),\log|y_0|):\ t\in\mathbb R\}
\]
is contained in an affine line in $\mathbb R^2$, contradicting the standing assumption that $C$ is not contained in an affine line.
Thus $l_y\neq 0$, so $\beta\neq 1$ and $\lambda:=\varepsilon\beta$ has infinite multiplicative order.
If $l_x=0$, then $A_\varepsilon(x,y)=(x,\lambda y)$.
Applying Lemma~\ref{lem:mult_translation} to $D$ with $(\alpha,\beta)=(1,\lambda)$ yields integers $(m,n)\neq(0,0)$ and $r\in\mathbb R^\times$ such that
\[
D\cap(\mathbb R^\times)^2\subset\{(x,y)\in(\mathbb R^\times)^2:\ x^m y^n=r\}.
\]
But invariance under $(1,\lambda)$ forces $\lambda^n=1$, and since $\lambda$ has infinite order we must have $n=0$.
Hence $m\neq 0$ and the constraint becomes $x^m=r$, i.e.\ $D$ is contained in a finite union of vertical lines.
Since $D$ is irreducible, it is contained in a single vertical line, so $p(t)$ is constant and $C$ is contained in an affine line,
again a contradiction.

Therefore $l_x\neq 0$ and $l_y\neq 0$.
Then Lemma~\ref{lem:skew_affine} applies (with $l=l_x$ and $\lambda=\varepsilon\beta$) and reaches a contradiction.

Hence $D_\varepsilon\neq D$ for both $\varepsilon=\pm1$, and we conclude $|(C+\mathbf a)\cap C|\le 16\delta^2$.

\smallskip
Combining the three cases completes the proof.
\end{proof}

Now we are ready to prove Theorem \ref{thm:enr}.

\begin{proof}[Proof of Theorem \ref{thm:enr}]
Let \( f,g \in \{\log(|x|), \mathrm{id}\} \), and interpret \(C\) and \(S\) on their natural domains when a \(\log\) appears.
Write
\[
C=\{(f(p(t)),g(q(t))): t\in \mathbb R \ \text{and the expression is defined}\},\]
\[
S=\{(f(p(a)),g(q(a))): a\in A \ \text{and the expression is defined}\}.
\]
Set \( \Pi = S + T \) and \( \Gamma = \{C + t : t \in T\} \). For each \(t\in T\), the translate \(C+t\) contains the set \(S+t\subset \Pi\), hence
\[
\mathcal I(\Pi,\Gamma)\ge |S||T|.
\]

Now we check the hypotheses of Fact~\ref{fact:sze}. (If needed to match the ``simple curve'' hypothesis, decompose \(C\) into \(\ll_\delta 1\) simple arcs by cutting at finitely many singular/self-intersection points, and replace \(\Gamma\) by the collection of translates of these arcs. This changes \(|\Gamma|\) and \(\mathcal I(\Pi,\Gamma)\) by at most \(\ll_\delta 1\) factors, so we keep the same notation.)

For distinct \(t,t'\in T\),
\[
(C+t)\cap(C+t') \;=\; (C\cap (C+(t'-t)))+t,
\]
so by Lemma~\ref{lem:sze1} we have \(|(C+t)\cap(C+t')|\le 16\delta^2\), i.e.\ any two curves in \(\Gamma\) intersect in \(\ll_\delta 1\) points.

Next, fix distinct points \(P,P'\in \Pi\). If \(P,P'\in C+t\) for some \(t\in T\), then \(P-t,\,P'-t\in C\) and
\[
(P-t)-(P'-t)=P-P'.
\]
Thus \(P'-t \in C\cap (C-(P-P'))\), and the map \(u=P'-t\) determines \(t=P'-u\). Therefore the number of \(t\in T\) such that \(P,P'\in C+t\) is at most
\[
|C\cap (C-(P-P'))|\le 16\delta^2
\]
by Lemma~\ref{lem:sze1} (since \(P\neq P'\) implies \(P-P'\neq 0\)).
Hence any two points of \(\Pi\) lie on \(\ll_\delta 1\) curves from \(\Gamma\).

So Fact~\ref{fact:sze} applies and gives
\begin{equation}\label{eq:inc-bound}
\mathcal{I}(\Pi, \Gamma) \ll_{\delta }|\Pi|^{2 / 3}|\Gamma|^{2 / 3}+|\Pi|+|\Gamma|.
\end{equation}
Substituting \( |\Pi| = |S+T| \) and \( |\Gamma| = |T| \), and using \(\mathcal I(\Pi,\Gamma)\ge |S||T|\), we have
\begin{equation}\label{eq:main-ineq}
|S||T| \ll_{\delta} |S+T|^{2/3}|T|^{2/3} + |S+T| + |T|.
\end{equation}
Write \(X:=|S+T|\), \(n:=|S|\), \(m:=|T|\).
Note that \(X\ge m\) since \(s_0+T\subset S+T\) for any fixed \(s_0\in S\) (if \(S=\varnothing\), the theorem is trivial).

If \(X \le c_\delta m\) (with \(c_\delta\) large enough), then \eqref{eq:main-ineq} gives
\(nm \ll_\delta m^{4/3}\), hence \(n \ll_\delta m^{1/3}\). Thus,
\[
n^{3/2}m^{1/2}\ll_\delta m \le X,
\]
so \(X\gg_\delta n^{3/2}m^{1/2}\).

Otherwise \(X>c_\delta m\), so the \(+\,|T|\) term in \eqref{eq:main-ineq} can be absorbed, giving
\[
nm \ll_{\delta} X^{2/3}m^{2/3} + X.
\]
Hence either \(nm \ll_\delta X\), which implies \(X\gg_\delta nm\), or else \(nm \ll_\delta X^{2/3}m^{2/3}\), which implies
\[
n m^{1/3}\ll_\delta X^{2/3}
\quad\Rightarrow\quad
X\gg_\delta n^{3/2}m^{1/2}.
\]
In all cases,
\[
|S+T| = X \gg_\delta \min\{\,|S||T|,\ |S|^{3/2}|T|^{1/2}\,\}.
\]

Now let \(A_0\subset A\) be the subset where the expression defining \(S\) is defined. If a \(\log\) appears, we remove at most \(\delta\) elements per polynomial, so \(|A_0|\ge |A|-2\delta\).
Moreover, for any fixed real value \(u\), the equation \(f(p(x))=u\) has at most \(2\delta\) real solutions (since it implies \(p(x)=u\) or \(p(x)=\pm e^{u}\), depending on \(f\)), so
\[
|S|\ \ge\ |f(p(A_0))|\ \ge\ \frac{|A_0|}{2\delta}.
\]
If \(|A|\le 4\delta\), then \(\min\{|A||T|,|A|^{3/2}|T|^{1/2}\}\ll_\delta |T|\le |S+T|\), so the claim is trivial.
If \(|A|>4\delta\), then \(|A_0|\ge |A|/2\), so \(|S|\ge |A|/(4\delta)\), and therefore
\[
|S+T|\gg_\delta \min\{|A||T|,|A|^{3/2}|T|^{1/2}\}.
\]
This completes the proof.
\end{proof}

\section{The two-dimensional case}
In this section, we establish the two-dimensional case of our Theorem \ref{thm:main2}, thereby giving a new proof for the main result from ~\cite{jing2022semialgebraic}. The proof will show how our strategy works and establish the base case for induction.

In the following proof, we use the symbol $|\cdot|$ to denote both the absolute value and the cardinality. When both appear in the same expression, we use the longer form $\Big|\cdot\Big|$ for the cardinality, while $|\cdot|$ continues to represent the absolute value.

\begin{lemma}\label{lem:size}
Let $p\in \mathbb{R}[x]$ be a non-constant polynomial with degree $\delta$.
For any finite nonempty $A\subset \mathbb{R}$, define
\[
\log|p(A)|:=\{\log|p(a)|:\ a\in A,\ p(a)\neq 0\}.
\]
Then $\Big|\log|p(A)|\Big|+1\sim \Big|p(A)\Big|\sim_{\delta}\Big|A\Big|$.
\end{lemma}

\begin{proof}
First, when a $\log$ appears we interpret the set where the expression is defined and write
\[
\log|p(A)|:=\{\log|p(a)|:\ a\in A,\ p(a)\neq 0\}.
\]
Set $S:=p(A)$ and $S_0:=S\setminus\{0\}$.
The map $y\mapsto \log|y|$ on $S_0$ is at most $2$-to-$1$, so
\[
\frac12\,\Big|S_0\Big|\le \Big|\log|p(A)|\Big|\le \Big|S_0\Big|\le \Big|S\Big|.
\]
Since $\Big|S_0\Big|\ge \Big|S\Big|-1$, we have
\[
\frac12\big(\Big|S\Big|-1\big)\le \Big|\log|p(A)|\Big|\le \Big|S\Big|.
\]
As $A\neq\varnothing$ implies $\Big|S\Big|\ge 1$, it follows that
$\Big|\log|p(A)|\Big|+1\sim \Big|S\Big|=\Big|p(A)\Big|$.

Next, for any map $p:\mathbb R\to\mathbb R$ we trivially have $\Big|p(A)\Big|\le \Big|A\Big|$.
For the lower bound, note that for any $c\in\mathbb R$ the equation $p(x)=c$ has at most $\delta$
solutions. Therefore,
\[
\Big|A\Big|
=\sum_{c\in p(A)} \Big|\{a\in A:\ p(a)=c\}\Big|
\le \delta\,\Big|p(A)\Big|.
\]
Thus $\Big|p(A)\Big|\ge \frac{1}{\delta}\Big|A\Big|$, and therefore
$\Big|p(A)\Big|\sim_{\delta}\Big|A\Big|$.
\end{proof}
\begin{lemma}\label{lem:sum}
Let $p_1(x), p_2(x), q_1(x), q_2(x) \in \mathbb{R}[x]$ be nonconstant polynomials,
each of degree at most $\delta$.
The following bounds hold for any finite sets $A, B \subset \mathbb{R}$ with
$|A| = |B| = n$:
\begin{enumerate}
\item[(i)] Assume $p_1(x), p_2(x), q_1(x), q_2(x)$ have no constant terms.
If $p_{1}\not\equiv_{a}q_{1}$, then
\[
|p_1(A) + p_2(B)| \cdot |q_1(A) + q_2(B)| \gg_{\delta} n^{5/2}.
\]
\item[(ii)] Assume $p_1(x), p_2(x), q_1(x), q_2(x)$ are monic.
If $p_{1}\not\equiv_{m}q_{1}$, then
\[
|p_1(A) \cdot p_2(B)| \cdot |q_1(A) \cdot q_2(B)| \gg_{\delta} n^{5/2}.
\]
\item[(iii)] For any polynomials $p_{1},p_{2},q_{1},q_{2}$,
\[
|p_1(A) + p_2(B)| \cdot |q_1(A) \cdot q_2(B)| \gg_{\delta} n^{5/2}.
\]
\end{enumerate}
\end{lemma}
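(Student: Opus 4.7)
The plan is to invoke Theorem \ref{thm:enr} three times, once for each part, always choosing the auxiliary set $T \subset \mathbb{R}^2$ to be a Cartesian product built from $p_2(B)$ and $q_2(B)$ (or their logarithms). Since $p_2, q_2$ are nonconstant of degree at most $\delta$, Lemma \ref{lem:size} gives $|T| \sim_\delta n^2$. With $|A| = n$, the minimum in Theorem \ref{thm:enr} evaluates to
\[
\min\{|A||T|,\ |A|^{3/2}|T|^{1/2}\} = \min\{n^3, n^{5/2}\} = n^{5/2},
\]
which is exactly the target bound. This is the main mechanism: enlarging $T$ past $|A|$ by one factor of $n$ (via the product structure) buys exactly the extra $n^{1/2}$ over the trivial $|S+T|\gg n^2$.

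In each case I take $S$ to be the image $\{(f(p_1(a)), g(q_1(a))) : a \in A\}$ for appropriate $f, g \in \{\mathrm{id}, \log|\cdot|\}$, so that $|S| \sim_\delta n$. For (i), set $f = g = \mathrm{id}$ and $T = p_2(B) \times q_2(B)$; then $S + T \subseteq (p_1(A)+p_2(B)) \times (q_1(A)+q_2(B))$. For (ii), set $f = g = \log|\cdot|$ (after removing the $O(\delta)$ elements of $A \cup B$ where some $p_i$ or $q_i$ vanishes) and $T = \log|p_2(B)| \times \log|q_2(B)|$; then every element of $S+T$ has the form $(\log|x|, \log|y|)$ with $x \in p_1(A) p_2(B)$ and $y \in q_1(A) q_2(B)$, so $|S+T| \leq |p_1(A) p_2(B)| \cdot |q_1(A) q_2(B)|$. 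For (iii), set $f = \mathrm{id}$, $g = \log|\cdot|$, and $T = p_2(B) \times \log|q_2(B)|$, obtaining the analogous mixed containment $|S+T| \leq |p_1(A)+p_2(B)| \cdot |q_1(A) q_2(B)|$.

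The substantive step is to verify in each case that the curve $C = \{(f(p_1(t)), g(q_1(t))) : t \in \mathbb{R}\}$ is not contained in an affine line, so that Theorem \ref{thm:enr} applies. In (i), $C$ passes through the origin (no constant terms), so any containing line must pass through the origin and thus forces $q_1 = \lambda p_1$, contradicting $p_1 \not\equiv_a q_1$; vertical and horizontal lines are ruled out by nonconstancy. In (ii), a line relation $\log|q_1(t)| = m \log|p_1(t)| + c$ gives $|q_1| = e^c |p_1|^m$, and matching leading terms of monic polynomials of positive degree forces $c = 0$ and $m = \deg(q_1)/\deg(p_1) \in \mathbb{Q}^+$, contradicting $p_1 \not\equiv_m q_1$. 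In (iii), a line relation $\log|q_1(t)| = m\, p_1(t) + c$ gives $q_1(t) = \pm e^c e^{m p_1(t)}$, which is transcendental whenever $m \neq 0$ and $p_1$ is nonconstant, while $m = 0$ forces $q_1$ constant; both are excluded by hypothesis, so no extra condition on $(p_1, q_1)$ is needed.

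I expect the main obstacle to be precisely this non-line verification and its degenerate sub-cases, especially the transcendence argument in (iii) and the use of monicity in (ii) to pin down the multiplicative constant. Once that is dispatched, Theorem \ref{thm:enr} gives $|S+T| \gg_\delta n^{5/2}$, and the containments above immediately upgrade this to the three advertised product lower bounds.
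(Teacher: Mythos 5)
Your proposal matches the paper's proof essentially verbatim: same choices of $S$ and $T$ in all three parts, same appeal to Lemma~\ref{lem:size} to get $|T|\sim_\delta n^2$, same application of Theorem~\ref{thm:enr}, and the same final bookkeeping. You are somewhat more explicit than the paper about verifying the non-line hypothesis (in particular the degenerate sub-cases and the removal of roots before taking logarithms), but this is a welcome elaboration of exactly the steps the paper leaves implicit, not a different route.
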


\begin{proof}
Throughout, $|X|$ denotes the cardinality of a finite set $X$,
and $\abs{\cdot}$ denotes absolute value of a real number.
For a finite set $X\subset\mathbb{R}$, interpret
\[
\Absset(X):=\{\abs{x}:\ x\in X\},\qquad
\Logset(X):=\{\log x:\ x\in X\ \text{(where defined)}\},
\]
and $\LogAbsset(X):=\Logset(\Absset(X))$.

\medskip
We will use the following standard fiber bound:
if $r\in\mathbb R[x]$ is nonconstant with $\deg r\le \delta$ and $X\subset\mathbb R$ is finite, then
\[
|r(X)|\ge \frac{|X|}{\delta}
\qquad\text{and}\qquad
|\Absset(r(X))|\ge \frac{|X|}{2\delta}.
\]
Indeed, for each $y\in\mathbb R$ the equation $r(t)=y$ has at most $\delta$ real solutions,
while $\abs{r(t)}=y$ means $r(t)=\pm y$.

\medskip
\noindent (i)
Apply Theorem~\ref{thm:enr} with $f=g=\mathrm{id}$, and set
\[
\begin{aligned}
S&=\{(p_1(a),q_1(a)):\ a\in A\},\\
T&=p_2(B)\times q_2(B).
\end{aligned}
\]
Because $p_1(0)=q_1(0)=0$, the curve $\{(p_1(t),q_1(t)):\ t\in\mathbb R\}$
is contained in an affine line iff there exist $\alpha,\beta$ not both zero such that
$\alpha p_1(t)+\beta q_1(t)\equiv 0$, equivalently $q_1\equiv c\,p_1$.
This is exactly the excluded case $p_1\equiv_a q_1$ (since both have no constant term),
hence Theorem~\ref{thm:enr} applies.

By the fiber bound, $|p_2(B)|\gg_\delta n$ and $|q_2(B)|\gg_\delta n$, so $|T|\gg_\delta n^2$.
Therefore
\[
|S+T|\gg_\delta
\min\Bigl\{|A|\,|T|,\ |A|^{3/2}|T|^{1/2}\Bigr\}
\gg_\delta n^{5/2}.
\]
Finally, $S+T\subset (p_1(A)+p_2(B))\times (q_1(A)+q_2(B))$, hence
\[
|p_1(A)+p_2(B)|\cdot |q_1(A)+q_2(B)|
\ \ge\ |S+T|\ \gg_\delta n^{5/2}.
\]
\medskip
\noindent (ii)--(iii)
Both parts follow from the same application of Theorem~\ref{thm:enr} as in (i),
after the standard reductions: remove $O_\delta(1)$ zeros so that $\log|\cdot|$ is defined,
use the fiber bounds $|r(X)|\gg_\delta |X|$ and $|\LogAbsset(r(X))|\gg_\delta |X|$,
and convert products into sums via $\log|xy|=\log|x|+\log|y|$.
For completeness, we provide the full details in Appendix~\ref{app:sumproof}.
\end{proof}

Now, we are ready to give new proofs for Theorem 1.1 and Theorem 1.2 of ~\cite{jing2022semialgebraic}. Later, we will see that this proof can be generalized to $d$ variables.  First, we will prove the two-dimensional case of Theorem \ref{thm:main2}.

\begin{theorem}\label{thm:jrt1} Let $P(x, y)$ and $Q(x, y)$ be bivariate polynomials in $\mathbb{R}[x, y] \backslash(\mathbb{R}[x] \cup\mathbb{R}[y])$ with degree at most $\delta $. Then for all $n \in \mathbb{N}$ and subsets $A$ and $B$ of $\mathbb{R}$ with $|A|=|B|=n$,
$$
\max \{|P(A, B)|,|Q(A, B)|\} \gg_{\delta} n^{5 / 4}
$$
unless one of the following holds:

(i) $P(x, y)=f(u_{1}(x)+u_{2}(y))$ and $Q(x, y)=g(v_{1}(x)+v_{2}(y))$ where $f(x)$, $g(x)$,$u_{1}(x)$,$u_{2}(x)$, $v_{1}(x)$ and $v_{2}(x)$ are nonconstant univariate polynomials over $\mathbb{R}$. Moreover, $$u_{1}(x)\equiv_{a}v_{1}(x)\text{~and~}u_{2}(x)\equiv_{a}v_{2}(x).$$

(ii) $P(x, y)=f(u_{1}(x)u_{2}(y))$ and $Q(x, y)=g(v_{1}(x) v_{2}(y))$ where  $f(x)$, $g(x)$,$u_{1}(x)$,$u_{2}(x)$,$v_{1}(x)$ and $v_{2}(x)$ are nonconstant univariate polynomials over $\mathbb{R}$. Moreover, $$u_{1}(x)\equiv_{m}v_{1}(x)\text{~and~}u_{2}(x)\equiv_{m}v_{2}(x).$$
\end{theorem}

\begin{proof}
Fix finite sets $A,B\subset\mathbb R$ with $|A|=|B|=n$.
By Fact~\ref{thm:er}, for each of $P$ and $Q$ we are in one of the following situations:
either $|P(A,B)|\gg_{\delta} n^{3/2}$ (resp.\ $|Q(A,B)|\gg_{\delta} n^{3/2}$), in which case we are done as $3/2>5/4$,
or else $P$ (resp.\ $Q$) has one of the structured forms
\[
P(x,y)=f\big(u_1(x)+u_2(y)\big)\quad\text{or}\quad P(x,y)=f\big(u_1(x)u_2(y)\big),
\]
\[
Q(x,y)=g\big(v_1(x)+v_2(y)\big)\quad\text{or}\quad Q(x,y)=g\big(v_1(x)v_2(y)\big),
\]
with $f,g,u_1,u_2,v_1,v_2\in\mathbb R[x]$ nonconstant (and with degrees bounded in terms of $\delta$).
Thus we may assume $P,Q$ admit such representations, and we analyze three cases.

\medskip
\noindent\emph{Additive-additive case.}
Assume
\[
P(x,y)=f\big(u_1(x)+u_2(y)\big),\qquad Q(x,y)=g\big(v_1(x)+v_2(y)\big).
\]
By replacing $u_i(x)$ with $u_i(x)-u_i(0)$ and adjusting $f$ accordingly (and similarly for $v_i,g$),
we may assume $u_1,u_2,v_1,v_2$ have no constant term.

If $u_1\equiv_a v_1$ and $u_2\equiv_a v_2$, then (i) holds and we are done.
Otherwise, at least one of these equivalences fails. If necessary, swap the roles of $A,B$ and simultaneously
swap $(u_1,v_1)$ with $(u_2,v_2)$ so that we may assume
$u_1\not\equiv_a v_1$.
Applying Lemma~\ref{lem:sum}(i) gives
\[
|u_1(A)+u_2(B)|\cdot |v_1(A)+v_2(B)|\gg_{\delta} n^{5/2}.
\]
Now Lemma~\ref{lem:size} (applied to $f$ on the set $u_1(A)+u_2(B)$, and to $g$ on $v_1(A)+v_2(B)$) gives
\[
|P(A,B)|=|f(u_1(A)+u_2(B))|\gg_{\delta} |u_1(A)+u_2(B)|,
\]
\[
|Q(A,B)|=|g(v_1(A)+v_2(B))|\gg_{\delta} |v_1(A)+v_2(B)|.
\]
Multiplying and using the previous bound,
\[
|P(A,B)|\cdot |Q(A,B)|\gg_{\delta} n^{5/2}.
\]
Hence $\max\{|P(A,B)|,|Q(A,B)|\}\ge \sqrt{|P(A,B)|\,|Q(A,B)|}\gg_{\delta} n^{5/4}$.

\medskip
\noindent\emph{Multiplicative--multiplicative case and mixed case.}
These follow from the same argument as above after applying Lemma~\ref{lem:sum}(ii) and Lemma~\ref{lem:sum}(iii),
respectively, and then Lemma~\ref{lem:size}.
For completeness we provide full proofs in Appendix~\ref{app:jrt-cases}.
\end{proof}

\section{Proof of Theorem \ref{thm:main1} and  Theorem \ref{thm:main2}}

Now we are ready to prove Theorem \ref{thm:main2}.

\begin{proof}[Proof of Theorem \ref{thm:main2}]
We argue by contradiction. Assume that
\[
\max\{|P(A_{1},\ldots,A_{d})|,\ |Q(A_{1},\ldots,A_{d})|\}\ll_{\delta} n^{\frac32-\frac{1}{2^{t+1}}}
\]
and that neither exceptional alternative (i) nor (ii) holds.

By Fact~\ref{thm:er}, since the exponent $\frac32-\frac1{2^{t+1}}<\frac32$, the above failure forces both $P$ and $Q$ to be in one of the structured forms. The three possibilities are: both additive, both multiplicative, or one additive and one multiplicative.

First assume we are in the additive--additive case:
\[
P(x_{1},\ldots,x_{d})=f\!\Big(\sum_{i=1}^{d}u_{i}(x_{i})\Big),
\qquad
Q(x_{1},\ldots,x_{d})=g\!\Big(\sum_{i=1}^{d}v_{i}(x_{i})\Big),
\]
with $f,g,u_{1},\ldots,u_{d},v_{1},\ldots,v_{d}\in\mathbb R[x]$ nonconstant. Absorbing constants into $f$ and $g$, we may assume each $u_i$ and $v_i$ has no constant term. Since we are \emph{not} in Theorem~\ref{thm:main2}(i), we have
\[
\bigl|\{\,i\in[d]:\ u_i\not\equiv_a v_i\,\}\bigr|\ge t.
\]
Write
\[
U_d:=\sum_{i=1}^{d}u_i(A_i),\qquad V_d:=\sum_{i=1}^{d}v_i(A_i).
\]
We prove the following statement for integers $d\ge 2$ and $1\le t\le d-1$.

\smallskip
\noindent{\bf Statement $\mathcal P(d,t)$.}
If there exists $I\subseteq[d]$ with $|I|=t$ such that $u_i\not\equiv_a v_i$ for all $i\in I$, then
\[
|U_d|\cdot |V_d|\gg_{\delta} n^{3-\frac1{2^{t}}}.
\]

\smallskip
\noindent We know $\mathcal P(2,1)$ holds by Lemma~\ref{lem:sum}(i) (applied after removing constant terms).

\smallskip
\noindent We now show that $\mathcal P(d,t)$ implies both $\mathcal P(d+1,t+1)$ and $\mathcal P(d+1,t)$.

\smallskip
\noindent{\bf (1) $\mathcal P(d,t)\Rightarrow \mathcal P(d+1,t+1)$.}
Assume $\mathcal P(d,t)$. Suppose for $d+1$ variables there is $I\subseteq[d+1]$ with $|I|=t+1$ and $u_i\not\equiv_a v_i$ for all $i\in I$. Relabel indices so that $1\in I$. Set
\[
S:=\{(u_1(a),\,v_1(a)):\ a\in A_1\}\subset\mathbb R^2,
\qquad
T:=U'\times V',
\]
where
\[
U':=\sum_{i=2}^{d+1}u_i(A_i),\qquad V':=\sum_{i=2}^{d+1}v_i(A_i).
\]
Then $S+T\subseteq (u_1(A_1)+U')\times (v_1(A_1)+V')$, hence
\[
|U_{d+1}|\cdot |V_{d+1}|=|u_1(A_1)+U'|\cdot |v_1(A_1)+V'|\ \ge\ |S+T|.
\]
Because $u_1\not\equiv_a v_1$ and both have no constant term, the curve $\{(u_1(x),v_1(x)):\ x\in\mathbb R\}$ is not contained in an affine line. Thus Theorem~\ref{thm:enr} (with $f=g=\mathrm{id}$, $p=u_1$, $q=v_1$, and $A=A_1$) gives
\[
|S+T|\gg_{\delta}\min\{\,|A_1||T|,\ |A_1|^{3/2}|T|^{1/2}\,\}.
\]
By the induction hypothesis $\mathcal P(d,t)$ applied to the $d$ remaining indices (since $I\setminus\{1\}$ has size $t$),
\[
|T|=|U'|\cdot |V'|\gg_{\delta} n^{3-\frac1{2^{t}}}.
\]
In particular $|T|\ge n$ for $n$ large, so the minimum above is the second term and we obtain
\[
|S+T|\gg_{\delta} n^{3/2}\,|T|^{1/2}\gg_{\delta} n^{3/2}\cdot
\Big(n^{3-\frac1{2^{t}}}\Big)^{1/2}
=
n^{3-\frac1{2^{t+1}}},
\]
which is exactly $\mathcal P(d+1,t+1)$.

\smallskip
\noindent{\bf (2) $\mathcal P(d,t)\Rightarrow \mathcal P(d+1,t)$.}
Let $I\subseteq[d+1]$ with $|I|=t$ and $u_i\not\equiv_a v_i$ for all $i\in I$. If $d+1\notin I$, then applying $\mathcal P(d,t)$ to indices $1,\dots,d$ yields
\[
\Big|\sum_{i=1}^{d}u_i(A_i)\Big|\cdot \Big|\sum_{i=1}^{d}v_i(A_i)\Big|\gg_{\delta} n^{3-\frac1{2^{t}}}.
\]
Since adding a nonempty set cannot decrease cardinality,
\[
|U_{d+1}|=\Big|\sum_{i=1}^{d}u_i(A_i)+u_{d+1}(A_{d+1})\Big|\ge \Big|\sum_{i=1}^{d}u_i(A_i)\Big|,
\]
and similarly $|V_{d+1}|\ge \big|\sum_{i=1}^{d}v_i(A_i)\big|$, hence $\mathcal P(d+1,t)$ follows. If $d+1\in I$, the same argument applies after relabeling.

\smallskip
\noindent Therefore $\mathcal P(d,t)$ holds for all $d\ge 2$ and $1\le t\le d-1$.

\smallskip
\noindent Returning to our situation, since $\bigl|\{i:\ u_i\not\equiv_a v_i\}\bigr|\ge t$, we get
\[
|U_d|\cdot |V_d|\gg_{\delta} n^{3-\frac1{2^{t}}}.
\]
Finally, since $\deg f,\deg g\le \delta$, every value of $f$ (resp.\ $g$) has at most $\delta$ preimages, hence for any finite set $X\subset\mathbb R$ we have $|f(X)|\ge |X|/\delta$ and $|g(X)|\ge |X|/\delta$. Applying this to $X=U_d$ and $X=V_d$ gives
\[
|P(A_{1},\ldots,A_{d})|\gg_{\delta} |U_d|,
\qquad
|Q(A_{1},\ldots,A_{d})|\gg_{\delta} |V_d|.
\]
Therefore
\[
|P(A_{1},\ldots,A_{d})|\cdot |Q(A_{1},\ldots,A_{d})|
\gg_{\delta}
|U_d|\cdot |V_d|
\gg_{\delta}
n^{3-\frac1{2^{t}}},
\]
so
\[
\max\{|P(A_{1},\ldots,A_{d})|,\ |Q(A_{1},\ldots,A_{d})|\}
\ge
\big(|P|\cdot |Q|\big)^{1/2}
\gg_{\delta}
n^{\frac32-\frac1{2^{t+1}}},
\]
contradicting the assumption.

\smallskip
\noindent For the other two cases, we only indicate the necessary modifications to the additive--additive argument. Full details are deferred to Appendix~\ref{app:mm}.
 The multiplicative--multiplicative case is identical after applying $\log|\cdot|$ to both coordinates (and discarding the $O_\delta(1)$ elements where some $u_i(a)$ or $v_i(a)$ vanishes so that $\log|\cdot|$ is defined), and using $\equiv_m$ in place of $\equiv_a$. The mixed additive--multiplicative case is treated the same way with one identity coordinate and one $\log|\cdot|$ coordinate. In particular, it yields an exponent at least $\frac32-\frac1{2^{d}}$, which is stronger than $\frac32-\frac1{2^{t+1}}$ since $t+1\le d$.
\end{proof}

To prove Theorem \ref{thm:main1}, we still need several lemmas. First, we need to prove a generalization of Lemma 5.1 in ~\cite{jing2022semialgebraic}.

\begin{lemma}\label{lem:poly}
Let $d\geq 2$ and $f,g,u_{1},\ldots,u_{d},v_{1},\ldots,v_{d}\in \mathbb{R}[x]$ be nonconstant polynomials.

(i)  Assume $u_{i}$ and $v_{i}$ have no constants for all $i\in[d]$. If
$$f(u_{1}(x_{1})+\cdots+u_{d}(x_{d}))=g(v_{1}(x_{1})+\cdots+v_{d}(x_{d}))$$
for all $x_{1},\ldots,x_{d}\in\mathbb{R}$, then $u_{i}\equiv_{a} v_{i}$ for all $i\in[d]$.

(ii) Assume $u_{i}$ and $v_{i}$ are monic. If
$$f(u_{1}(x_{1})\cdot\ldots\cdot u_{d}(x_{d}))=g(v_{1}(x_{1}) \cdot\ldots\cdot v_{d}(x_{d}))$$
for all $x_{1},\ldots,x_{d}\in\mathbb{R}$, then $u_{i}\equiv_{m} v_{i}$ for all $i\in[d].$
\end{lemma}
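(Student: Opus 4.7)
The plan is to differentiate the given identity in one variable at a time and extract, on each coordinate separately, a constraint comparing $u_i$ with $v_i$. No combinatorics is needed; the proof is essentially calculus plus a leading-coefficient comparison.

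For part (i), set $U(x_1,\ldots,x_d) = \sum_{i=1}^d u_i(x_i)$ and $V(x_1,\ldots,x_d) = \sum_{i=1}^d v_i(x_i)$. Applying $\partial/\partial x_i$ to $f(U) = g(V)$ gives $f'(U)\,u_i'(x_i) = g'(V)\,v_i'(x_i)$. Since $f, g$ are nonconstant and each $u_k, v_k$ is nonconstant, both $f'(U)$ and $g'(V)$ are nonzero as polynomials in $(x_1,\ldots,x_d)$. On the Zariski-dense open set where $f'(U), g'(V), v_i'(x_i), v_j'(x_j)$ are all nonzero, dividing the $i$-equation by the $j$-equation yields
\[
\frac{u_i'(x_i)}{v_i'(x_i)} = \frac{u_j'(x_j)}{v_j'(x_j)}.
\]
Since the two sides depend on disjoint variables, each equals a common constant $\lambda \in \mathbb{R}^{\neq 0}$. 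Integrating gives $u_i(x) = \lambda v_i(x) + c_i$, and the hypothesis that $u_i, v_i$ have no constant term forces $c_i = 0$, so $u_i = \lambda v_i$ for every $i$; in particular $u_i \equiv_a v_i$.

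For part (ii), set $U = \prod_{i} u_i(x_i)$ and $V = \prod_{i} v_i(x_i)$. On the open set where no factor vanishes, $\partial_i U / U = u_i'(x_i)/u_i(x_i)$, and similarly for $V$, so differentiating $f(U) = g(V)$ yields
\[
f'(U)\,\frac{u_i'(x_i)}{u_i(x_i)}\,U = g'(V)\,\frac{v_i'(x_i)}{v_i(x_i)}\,V.
\]
Taking ratios for two indices $i, j$ eliminates $f'(U), g'(V), U, V$, and a short rearrangement produces the separated identity
\[
\frac{u_i'(x_i)\,v_i(x_i)}{u_i(x_i)\,v_i'(x_i)} = \frac{u_j'(x_j)\,v_j(x_j)}{u_j(x_j)\,v_j'(x_j)},
\]
which therefore equals a common constant $\kappa$. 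This is the same as $(\log|u_i|)' = \kappa (\log|v_i|)'$, so $|u_i(x)| = C_i |v_i(x)|^{\kappa}$ on an interval; after writing $\kappa = p/q$ in lowest terms, the polynomial identity $u_i(x)^{2q} = C_i^{2q} v_i(x)^{2p}$ (obtained by squaring and raising to $q$) then holds for all $x \in \mathbb{R}$. Comparing leading coefficients as $x \to \infty$ and using that $u_i, v_i$ are monic forces $C_i = 1$ and $\kappa = \deg u_i / \deg v_i \in \mathbb{Q}^{+}$, so $u_i \equiv_m v_i$.

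The main obstacle, such as it is, is bookkeeping: each division is legitimate only on a Zariski-dense open set, and rational-function identities valid there must be lifted to identities on all of $\mathbb{R}^d$. The nonconstancy of $f, g, u_i, v_i$ makes every denominator a nonzero polynomial, so this lifting is routine. In (ii), the only genuine subtlety is that the target equivalence $|u_i| = |v_i|^{\kappa}$ is an identity between real-valued functions rather than polynomials, so signs on intervals where $u_i$ or $v_i$ change sign are automatically absorbed by the absolute values, and no case analysis is required.
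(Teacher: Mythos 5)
Your proof is correct and follows essentially the same strategy as the paper's: differentiate the identity in each variable, use a separation-of-variables argument to conclude the relevant ratios are constant, then integrate and invoke the no-constant-term (resp.\ monic) normalization. The only cosmetic differences are that you divide the $i$-equation by the $j$-equation directly rather than isolating $f'(U)/g'(V)$, and in part (ii) you make the leading-coefficient comparison explicit to pin down $C_i=1$ and $\kappa=\deg u_i/\deg v_i\in\mathbb{Q}^{+}$, which the paper leaves implicit.
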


\begin{proof}
(i) Let $U = u_1(x_1) + \cdots + u_d(x_d)$ and $V = v_1(x_1) + \cdots + v_d(x_d)$.
Taking partial derivatives with respect to $x_i$ gives
\[
f'(U)\,u_i'(x_i)=g'(V)\,v_i'(x_i)\qquad (i\in[d]).
\]
Thus
\[
\frac{f'(U)}{g'(V)}=\frac{v_i'(x_i)}{u_i'(x_i)}.
\]
The left-hand side depends on all variables, while the right-hand side depends only on $x_i$; hence each ratio
$v_i'(x)/u_i'(x)$ is a constant $\alpha_i$.
Comparing $i$ and $j$ shows $\alpha_i=\alpha_j$ for all $i,j$, so $v_i'(x)=\alpha u_i'(x)$ for a single $\alpha\in\mathbb R$.
Integrating, $v_i(x)=\alpha u_i(x)+\beta_i$. Since both have zero constant term, $\beta_i=0$.
Hence $u_i\equiv_a v_i$ for all $i$.

(ii) Let $U = \prod_{i=1}^d u_i(x_i)$ and $V=\prod_{i=1}^d v_i(x_i)$.
Differentiating with respect to $x_i$ (in the fraction field of $\mathbb R[x_1,\dots,x_d]$) gives
\[
f'(U)\,U\,\frac{u_i'(x_i)}{u_i(x_i)}=g'(V)\,V\,\frac{v_i'(x_i)}{v_i(x_i)}\qquad (i\in[d]).
\]
Fix $i\neq j$ and divide the $i$-th identity by the $j$-th identity:
\[
\frac{u_i'(x_i)/u_i(x_i)}{u_j'(x_j)/u_j(x_j)}=
\frac{v_i'(x_i)/v_i(x_i)}{v_j'(x_j)/v_j(x_j)}.
\]
Equivalently,
\[
\frac{v_i'(x_i)/v_i(x_i)}{u_i'(x_i)/u_i(x_i)}=
\frac{v_j'(x_j)/v_j(x_j)}{u_j'(x_j)/u_j(x_j)}\qquad\text{for all }x_i,x_j.
\]
Hence for each $i$ the ratio
\[
k_i:=\frac{v_i'(x)/v_i(x)}{u_i'(x)/u_i(x)}\in\mathbb R(x)
\]
is constant in $x$, and moreover $k_i=k_j$ for all $i,j$.
Thus there exists a single constant $k\in\mathbb R$ such that
\[
\frac{v_i'(x)}{v_i(x)}=k\,\frac{u_i'(x)}{u_i(x)}\qquad\text{in }\mathbb R(x)\quad(i\in[d]).
\]
Taking $x\to\infty$ shows
\[
\frac{v_i'(x)}{v_i(x)}=\frac{\deg v_i}{x}+O\!\left(\frac1{x^2}\right),
\qquad
\frac{u_i'(x)}{u_i(x)}=\frac{\deg u_i}{x}+O\!\left(\frac1{x^2}\right),
\]
hence $k=\deg(v_i)/\deg(u_i)\in\mathbb Q_{>0}$.
Write $k=p/q$ in lowest terms with $p,q\in\mathbb N$.
Then
\[
q\,\frac{v_i'}{v_i}-p\,\frac{u_i'}{u_i}=0
\quad\Longrightarrow\quad
\left(\frac{v_i^q}{u_i^p}\right)'= \frac{v_i^q}{u_i^p}\left(q\,\frac{v_i'}{v_i}-p\,\frac{u_i'}{u_i}\right)=0.
\]
Over characteristic $0$, the only rational functions with zero derivative are constants, so $v_i^q/u_i^p=C_i$ for some $C_i\in\mathbb R^\times$.
Thus
\[
v_i(x)^q = C_i\,u_i(x)^p.
\]
Since $u_i$ and $v_i$ are monic, comparing leading coefficients gives $C_i=1$, hence $v_i^q=u_i^p$.
Therefore $|u_i(x)|^p=|v_i(x)|^q$ for all $x$, i.e.\ $u_i\equiv_m v_i$ (with exponent $\kappa=q/p\in\mathbb Q^+$).
\end{proof}

We need another combinatorial lemma.

\begin{lemma}\label{lem:counting}
Let $d\geq 1$ be a positive integer and let $\equiv$ be an equivalence relation on the set $[d] := \{1,2,\dots,d\}$.
Denote by $\mathfrak{S}_d$ the group of permutations on $[d]$.
For an integer $t\in [d]$, assume that for every permutation $\sigma \in \mathfrak{S}_d$, there exists a subset
$I_{\sigma}\subseteq [d]$ with $|I_{\sigma}|=t$ satisfying
\[
i\equiv \sigma(i) \quad \text{for all } i\in I_{\sigma}.
\]
Then there exists at least one equivalence class whose size is at least
\[
\left\lceil \frac{d+t}{2} \right\rceil.
\]
\end{lemma}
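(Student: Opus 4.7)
The plan is to reduce the lemma to the identity
\[
\min_{\sigma\in\mathfrak{S}_d}\bigl|\{i\in[d]:i\equiv\sigma(i)\}\bigr|=\max(0,\,2c_1-d),
\]
where $c_1\geq c_2\geq\cdots\geq c_k$ denote the sizes of the equivalence classes of $\equiv$. Once this identity is established, the hypothesis forces $\max(0,2c_1-d)\geq t$, and since $t\geq 2$ we must be in the case where the maximum is attained by its nonzero argument, giving $2c_1-d\geq t$ and hence $c_1\geq\lceil (d+t)/2\rceil$, which is the desired conclusion.

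For the lower bound $|\{i:i\equiv\sigma(i)\}|\geq 2c_1-d$ when $c_1>d/2$, I would argue via bijectivity applied to the largest class $C_1$: its $c_1$ elements are mapped injectively into $[d]$, and at most $d-c_1$ of them can land outside $C_1$, so at least $c_1-(d-c_1)=2c_1-d$ of them are mapped back into $C_1$, each contributing to the count. When $c_1\leq d/2$ the lower bound is trivially $0$.

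For the matching upper bound I would give two explicit constructions. If $c_1\leq d/2$, I would produce a $\sigma$ with no $i\equiv\sigma(i)$ by applying Hall's marriage theorem to the bipartite graph on two copies of $[d]$ with edge set $\{(i,j):i\not\equiv j\}$; the Hall condition holds because any $S\subseteq[d]$ lying in a single class $C_j$ has neighborhood $[d]\setminus C_j$ of size $d-c_j\geq c_1\geq|S|$, while any $S$ meeting two distinct classes has neighborhood all of $[d]$. If instead $c_1>d/2$, I would build $\sigma$ directly: choose any $d-c_1$ elements of $C_1$, pair them bijectively with the $d-c_1$ elements of $[d]\setminus C_1$, and swap each pair; on the remaining $2c_1-d$ elements of $C_1$ take any cyclic permutation. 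The transpositions contribute nothing to the count since they cross classes, while the cyclic part contributes exactly $2c_1-d$ fixed-mod points, matching the lower bound.

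The main obstacle is the Hall-type verification in the case $c_1\leq d/2$, which is really a one-case check once one notices that the only dangerous subsets are those contained in a single equivalence class; the remaining ingredients are short counting or explicit constructions, and the final deduction from the min-formula to $c_1\geq\lceil(d+t)/2\rceil$ is immediate.
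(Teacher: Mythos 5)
Your proof is correct and takes a genuinely different route from the paper's. The paper proceeds by contradiction: it orders the classes $E_1\supseteq E_2\supseteq\cdots$ by size, arranges them as consecutive blocks in $[d]$, takes the single cyclic shift $\sigma(i)=i+|E_1|\bmod d$, and checks in two cases ($|E_1|\le d/2$ and $|E_1|>d/2$) that this one permutation has fewer than $t$ indices with $i\equiv\sigma(i)$, which already contradicts the hypothesis. You instead establish the sharper closed formula $\min_{\sigma}|\{i:i\equiv\sigma(i)\}|=\max(0,\,2c_1-d)$, getting the upper bound via Hall's theorem when $c_1\le d/2$ and via an explicit transposition-plus-cycle construction when $c_1>d/2$, and then read off the conclusion. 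Your approach is more modular and yields the exact extremal value as a byproduct; the paper's is more economical, producing one hand-built permutation that settles both cases at once without invoking Hall. Two minor remarks: you cite $t\ge 2$ to discard the $\max=0$ branch, but $t\ge 1$ (guaranteed by $t\in[d]$) already suffices; and the lower-bound half of your identity, while true and clean, is not needed for the lemma, which only requires the construction.
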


\begin{proof}
Let $E_1,\dots,E_k$ be the equivalence classes, ordered so that $|E_1|\ge \cdots \ge |E_k|$.
Write $e_s:=|E_s|$ and set $e_1=:m$.
Relabel $[d]$ so that each class is a consecutive block:
\[
E_1=\{1,\dots,m\},\quad E_2=\{m+1,\dots,m+e_2\},\ \dots
\]
and $\sum_{s=1}^k e_s=d$.

Define the cyclic shift permutation
\[
\sigma(i):= i+m \pmod d,
\]
with values taken in $\{1,\dots,d\}$.
Let
\[
F_\sigma:=\{i\in[d]:\ i\equiv \sigma(i)\}.
\]
Then
\[
|F_\sigma|
=\sum_{s=1}^k |\{i\in E_s:\ \sigma(i)\in E_s\}|
=\sum_{s=1}^k |\sigma(E_s)\cap E_s|.
\]

We claim $|F_\sigma|<t$ provided $m<\left\lceil\frac{d+t}{2}\right\rceil$.
This contradicts the hypothesis (because then no subset of size $t$ can be contained in $F_\sigma$), and hence forces
$m\ge\left\lceil\frac{d+t}{2}\right\rceil$.

\smallskip\noindent
\emph{Case 1: $m\le d/2$.}
Then $2m\le d$, so shifting by $m$ does not wrap around on any block of length $\le m$.
Since each $E_s$ has size $e_s\le m$, the translate $E_s+m$ starts at least $e_s$ positions to the right of $E_s$,
hence $\sigma(E_s)\cap E_s=\varnothing$ for all $s$.
Thus $|F_\sigma|=0<t$.

\smallskip\noindent
\emph{Case 2: $m>d/2$.}
Let $m':=d-m$ (so $m'<d/2$), and note that $\sigma$ is also the shift by $-m'$.
For $s\ge 2$ we have $e_s\le \sum_{r\ge 2} e_r = d-m = m'$, hence shifting the block $E_s$ by $-m'$ moves it by at least its own length,
and again $\sigma(E_s)\cap E_s=\varnothing$ for all $s\ge 2$.

For $E_1=\{1,\dots,m\}$, the shift by $m$ wraps around, and one checks directly that
\[
\sigma(E_1)=\{m+1,\dots,d\}\cup\{1,\dots,2m-d\},
\]
so
\[
|\sigma(E_1)\cap E_1|=2m-d.
\]
Therefore $|F_\sigma|=2m-d$.

Now assume $m<\left\lceil\frac{d+t}{2}\right\rceil$. Then
\[
2m-d \le 2\left(\left\lceil\frac{d+t}{2}\right\rceil-1\right)-d < t,
\]
so again $|F_\sigma|<t$.

\smallskip
In both cases, if $m<\left\lceil\frac{d+t}{2}\right\rceil$ we get $|F_\sigma|<t$, contradicting the assumption.
Hence $m\ge \left\lceil\frac{d+t}{2}\right\rceil$, i.e.\ some equivalence class has size at least that large.
\end{proof}

Now we use Theorem \ref{thm:main2} to prove Theorem \ref{thm:main1}.
\begin{proof}
Assume for contradiction that there exist $n\in\mathbb N$ and $A\subset\mathbb R$ with $|A|=n$ such that
$$|P(A,\ldots,A)|\ll_{\delta} n^{\frac32-\frac1{2^{t+1}}},$$
and that neither alternative (i) nor (ii) of Theorem~\ref{thm:main1} holds.

For $\sigma\in\mathfrak S_d$, write
$P^{\sigma}(x_1,\ldots,x_d):=P(x_{\sigma(1)},\ldots,x_{\sigma(d)})$.
Then $P^{\sigma}(A,\ldots,A)=P(A,\ldots,A)$ for every $\sigma$.
Applying Theorem~\ref{thm:main2} to the pair $(P,P^{\sigma})$ with $A_1=\cdots=A_d=A$, we have that for every $\sigma$
the pair $(P,P^{\sigma})$ forms either an additive pair or a multiplicative pair (since the expansion conclusion fails by assumption).

We first note that $P$ cannot be simultaneously of additive and multiplicative type.
Indeed, suppose for contradiction that
\[
P=f\big(u_1(x_1)+\cdots+u_d(x_d)\big)=g\big(w_1(x_1)\cdots w_d(x_d)\big)
\]
with all $u_i,w_i$ nonconstant.
Then in the fraction field of $\mathbb R[x_1,\ldots,x_d]$, for $i\neq j$ we have
\[
\frac{\partial_{x_i}P}{\partial_{x_j}P}
=\frac{u_i'(x_i)}{u_j'(x_j)}
=\frac{w_i'(x_i)/w_i(x_i)}{w_j'(x_j)/w_j(x_j)}.
\]
Fix $j$ and vary $x_i,x_j$; separation of variables implies there exists $c_i\in\mathbb R^\times$ such that
\[
u_i'(x)=c_i\,\frac{w_i'(x)}{w_i(x)}\qquad\text{in }\mathbb R(x).
\]
Clearing denominators gives $w_i(x)\,u_i'(x)=c_i\,w_i'(x)$ in $\mathbb R[x]$, hence $w_i\mid w_i'$.
This is impossible for a nonconstant polynomial (since $\deg w_i'<\deg w_i$), a contradiction.
Therefore, exactly one of the two types works for $P$.

\smallskip
\noindent\emph{Additive case.}
Assume $P$ is of additive type. Then (taking $\sigma=\mathrm{id}$ above) we may write
$P(x_1,\ldots,x_d)=f(u_1(x_1)+\cdots+u_d(x_d))$
with $f,u_1,\ldots,u_d$ nonconstant.
By subtracting constants from each $u_i$ and absorbing the total constant into $f$,
we may assume $u_i(0)=0$ for all $i$.

When we apply Theorem~\ref{thm:main2} to $(P,P^\sigma)$, the $t$-additive pair conclusion a priori provides
some representation $P=\widetilde f(\sum_{i=1}^d \widetilde u_i(x_i))$.
After shifting constants so that $\widetilde u_i(0)=0$ for all $i$ (absorbing the total constant into $\widetilde f$),
Lemma~\ref{lem:poly}(i) applied to
\[
f\Big(\sum_{i=1}^d u_i(x_i)\Big)=\widetilde f\Big(\sum_{i=1}^d \widetilde u_i(x_i)\Big)
\]
implies $\widetilde u_i\equiv_a u_i$ for every $i$.
Hence, up to $\equiv_a$, we may assume the $t$-additive pair decomposition of $P$ uses our fixed $u_i$.

Define an equivalence relation on $[d]$ by $i\sim j$ iff $u_i\equiv_a u_j$.

Fix an arbitrary $\sigma\in\mathfrak S_d$.
Since $P$ is not of multiplicative type, the pair $(P,P^{\sigma})$ must form a $t$-additive pair.
Thus there exist nonconstant univariate polynomials $g,v_1,\ldots,v_d$ such that
$$P^{\sigma}(x_1,\ldots,x_d)=g(v_1(x_1)+\cdots+v_d(x_d)),$$
and the mismatch set
$M_{\sigma}:=\{i\in[d]: u_i\not\equiv_a v_i\}$
satisfies $|M_{\sigma}|<t$.
On the other hand,
\begin{align*}
P^{\sigma}(x_1,\ldots,x_d)
&:=P(x_{\sigma(1)},\ldots,x_{\sigma(d)})\\
&=f\big(u_1(x_{\sigma(1)})+\cdots+u_d(x_{\sigma(d)})\big)\\
&=f\big(u_{\sigma^{-1}(1)}(x_1)+\cdots+u_{\sigma^{-1}(d)}(x_d)\big).
\end{align*}
After replacing each $v_i(x)$ by $v_i(x)-v_i(0)$ and composing $g$ with a translation (so that the equality defining $P^\sigma$ is unchanged),
we may assume $v_i(0)=0$ for all $i$; this normalization does not increase the mismatch set because $u_i(0)=0$ for all $i$, and if $u_i\equiv_a v_i$ then necessarily $v_i(0)=0$ as well.
Applying Lemma~\ref{lem:poly}(i) to
$$f(u_{\sigma^{-1}(1)}(x_1)+\cdots+u_{\sigma^{-1}(d)}(x_d))
=g(v_1(x_1)+\cdots+v_d(x_d))$$
gives $u_{\sigma^{-1}(i)}\equiv_a v_i$ for all $i\in[d]$.

Therefore, for every $\sigma$ there exists a set $I_{\sigma}\subseteq[d]$ with
$|I_{\sigma}|\ge d-(t-1)=d-t+1$
such that $i\sim \sigma^{-1}(i)$ for all $i\in I_{\sigma}$.
Since $\sigma$ ranges over all permutations, so does $\sigma^{-1}$, hence we may reindex and conclude:
for every $\tau\in\mathfrak S_d$ there exists $J_{\tau}\subseteq[d]$ with $|J_{\tau}|=d-t+1$ such that
$i\sim \tau(i)$ for all $i\in J_{\tau}$.

Now apply Lemma~\ref{lem:counting} with this equivalence relation and with the parameter
$t':=d-t+1$.
We obtain an equivalence class $S\subseteq[d]$ with
$|S|\ge \left\lceil \frac{d+t'}2\right\rceil
=\left\lceil \frac{2d-t+1}{2}\right\rceil
=d-\left\lfloor \frac{t-1}{2}\right\rfloor$,
and $u_i\equiv_a u_j$ for all $i,j\in S$.
This is exactly alternative (i) in Theorem~\ref{thm:main1} (taking $I=S$).

\smallskip
\noindent\emph{Multiplicative case.}
Assume $P$ is of multiplicative type. Then (taking $\sigma=\mathrm{id}$ above) we may write
\[
P(x_1,\ldots,x_d)=f\big(u_1(x_1)\cdots u_d(x_d)\big),
\]
with $f,u_1,\ldots,u_d$ nonconstant.
By dividing each $u_i$ by its leading coefficient and absorbing the resulting nonzero constant into $f$,
we may assume that every $u_i$ is monic.

Similarly, in the multiplicative setting, any representation $P=\widetilde f(\prod_{i=1}^d \widetilde u_i(x_i))$
can be normalized so that each $\widetilde u_i$ is monic (absorbing constants into $\widetilde f$).
Then Lemma~\ref{lem:poly}(ii) implies $\widetilde u_i\equiv_m u_i$ for all $i$.
Thus we may assume the $t$-multiplicative pair decomposition of $P$ uses our fixed monic $u_i$, up to $\equiv_m$.

Define an equivalence relation on $[d]$ by $i\sim j$ iff $u_i\equiv_m u_j$.

Fix an arbitrary $\sigma\in\mathfrak S_d$.
Since $P$ is not of additive type, the pair $(P,P^\sigma)$ must form a $t$-multiplicative pair.
Thus there exist nonconstant univariate polynomials $g,v_1,\ldots,v_d$ such that
\[
P^{\sigma}(x_1,\ldots,x_d)=g\big(v_1(x_1)\cdots v_d(x_d)\big),
\]
and the mismatch set
\[
M_{\sigma}:=\{i\in[d]: u_i\not\equiv_m v_i\}
\]
satisfies $|M_\sigma|<t$.
Normalizing as above, we may assume each $v_i$ is monic: write $v_i(x)=c_i\,\widetilde v_i(x)$ with $\widetilde v_i$ monic and $c_i\in\mathbb R^\times$, and absorb $\prod_i c_i$ into the outer polynomial $g$.
This does not increase the mismatch set, because if $u_i$ is monic and $u_i\equiv_m v_i$ then the leading coefficient of $v_i$ has $|c_i|=1$, and dividing by such a constant does not affect $\equiv_m$.

On the other hand,
\begin{align*}
P^{\sigma}(x_1,\ldots,x_d)
&=P(x_{\sigma(1)},\ldots,x_{\sigma(d)})\\
&=f\big(u_1(x_{\sigma(1)})\cdots u_d(x_{\sigma(d)})\big)\\
&=f\big(u_{\sigma^{-1}(1)}(x_1)\cdots u_{\sigma^{-1}(d)}(x_d)\big).
\end{align*}
Applying Lemma~\ref{lem:poly}(ii) to the identity
\[
f\big(u_{\sigma^{-1}(1)}(x_1)\cdots u_{\sigma^{-1}(d)}(x_d)\big)
=
g\big(v_1(x_1)\cdots v_d(x_d)\big),
\]
we obtain
\[
u_{\sigma^{-1}(i)}\equiv_m v_i \qquad \text{for all } i\in[d].
\]
Hence for every $i\in[d]\setminus M_\sigma$ we have
\[
u_i\equiv_m v_i\equiv_m u_{\sigma^{-1}(i)},
\]
i.e.\ $i\sim \sigma^{-1}(i)$.

Therefore, for every $\sigma$ there exists a set $I_\sigma\subseteq[d]$ with
$|I_\sigma|\ge d-(t-1)=d-t+1$ such that $i\sim \sigma^{-1}(i)$ for all $i\in I_\sigma$.
Since $\sigma$ ranges over all permutations, so does $\sigma^{-1}$, hence:
for every $\tau\in\mathfrak S_d$ there exists $J_\tau\subseteq[d]$ with $|J_\tau|=d-t+1$ such that
$i\sim \tau(i)$ for all $i\in J_\tau$.

Now apply Lemma~\ref{lem:counting} with this equivalence relation and with parameter $t':=d-t+1$.
We obtain an equivalence class $S\subseteq[d]$ with
\[
|S|\ge \left\lceil \frac{d+t'}2\right\rceil
=\left\lceil \frac{2d-t+1}{2}\right\rceil
=d-\left\lfloor \frac{t-1}{2}\right\rfloor,
\]
and $u_i\equiv_m u_j$ for all $i,j\in S$.
This is exactly alternative (ii) in Theorem~\ref{thm:main1} (taking $I=S$).

\end{proof}

\appendix
\section{Proof of Lemma~\ref{lem:sum}(ii)--(iii)}\label{app:sumproof}

\medskip
\noindent (ii)
Let
\[
\begin{aligned}
A_0&=\{a\in A:\ p_1(a)q_1(a)\neq 0\},\\
B_0&=\{b\in B:\ p_2(b)q_2(b)\neq 0\}.
\end{aligned}
\]
Each polynomial has at most $\delta$ real zeros, so $|A_0|\ge n-2\delta$ and $|B_0|\ge n-2\delta$.
(If $n\le 10\delta$ the desired bound is trivial after adjusting the implicit constant,
so we may assume $|A_0|\sim_\delta n$ and $|B_0|\sim_\delta n$.)

Apply Theorem~\ref{thm:enr} with $f=g=\log\abs{\cdot}$, and set
\[
\begin{aligned}
S&=\{(\log\abs{p_1(a)},\,\log\abs{q_1(a)}):\ a\in A_0\},\\
T&=\LogAbsset(p_2(B_0))\times \LogAbsset(q_2(B_0)).
\end{aligned}
\]

We claim the curve $\{(\log\abs{p_1(t)},\log\abs{q_1(t)})\}$ is not contained in an affine line
under $p_1\not\equiv_m q_1$.
Indeed, if it were contained in a line, then there exist $\alpha,\beta,\gamma\in\mathbb R$ with
$(\alpha,\beta)\neq(0,0)$ such that
\[
\alpha\log\abs{p_1(t)}+\beta\log\abs{q_1(t)}=\gamma
\]
for all $t$ in the domain.
If $\alpha=0$, then $\log\abs{q_1(t)}$ is constant on an infinite set, forcing $q_1$ to be constant,
a contradiction. Similarly, if $\beta=0$, then $p_1$ is constant, a contradiction. Hence $\alpha\beta\neq 0$.

Letting $t\to+\infty$ and using
$\log\abs{p_1(t)}=(\deg p_1)\log t+O(1)$ and
$\log\abs{q_1(t)}=(\deg q_1)\log t+O(1)$ gives
$\alpha\deg(p_1)+\beta\deg(q_1)=0$, hence $\alpha/\beta\in\mathbb Q$.
Scaling, we may assume $\alpha=m$ and $\beta=-n$ with coprime integers $m,n\ge 1$, so
$m\log\abs{p_1(t)}-n\log\abs{q_1(t)}=\gamma$, hence
$\abs{p_1(t)}^m=e^\gamma \abs{q_1(t)}^n$.
Squaring yields $p_1(t)^{2m}=e^{2\gamma}q_1(t)^{2n}$.
Since $p_1,q_1$ are monic, we must have $e^{2\gamma}=1$, hence $p_1^{2m}=q_1^{2n}$,
i.e.\ $p_1\equiv_m q_1$, a contradiction.

Next, by the fiber bound applied to $\abs{p_2}$ and $\abs{q_2}$,
\[
|\LogAbsset(p_2(B_0))|
=|\Absset(p_2(B_0))|
\gg_\delta n,
\qquad
|\LogAbsset(q_2(B_0))|
=|\Absset(q_2(B_0))|
\gg_\delta n,
\]
hence $|T|\gg_\delta n^2$.
Theorem~\ref{thm:enr} gives $|S+T|\gg_\delta n^{5/2}$.
Since
\[
\begin{aligned}
S+T\subset {}&
\bigl(\LogAbsset(p_1(A_0))+\LogAbsset(p_2(B_0))\bigr)\\
&\times
\bigl(\LogAbsset(q_1(A_0))+\LogAbsset(q_2(B_0))\bigr),
\end{aligned}
\]
we obtain
\[
\begin{aligned}
&\bigl|\LogAbsset(p_1(A_0))+\LogAbsset(p_2(B_0))\bigr|\cdot
 \bigl|\LogAbsset(q_1(A_0))+\LogAbsset(q_2(B_0))\bigr|\\
&\ge |S+T|\gg_\delta n^{5/2}.
\end{aligned}
\]

Because $\log$ is injective on $(0,\infty)$ and $\log(xy)=\log x+\log y$, we have the set identity
\[
\LogAbsset(U)+\LogAbsset(V)
=\Logset\bigl(\Absset(U)\cdot \Absset(V)\bigr)
\qquad(U,V\subset\mathbb R\setminus\{0\}),
\]
and therefore
\[
\bigl|\LogAbsset(U)+\LogAbsset(V)\bigr|
=\bigl|\Absset(U)\cdot \Absset(V)\bigr|.
\]
Applying this with $U=p_1(A_0)$ and $V=p_2(B_0)$ (and similarly for $q_1,q_2$) yields
\[
\begin{aligned}
&\bigl|\Absset(p_1(A_0))\cdot \Absset(p_2(B_0))\bigr|\cdot
\bigl|\Absset(q_1(A_0))\cdot \Absset(q_2(B_0))\bigr|
\gg_\delta n^{5/2}.
\end{aligned}
\]

Finally, note the exact identity
\[
\Absset(U)\cdot \Absset(V)=\Absset(U\cdot V)\qquad (U,V\subset\mathbb R),
\]
since $|u||v|=|uv|$.
Applying this with $U=p_1(A_0)$ and $V=p_2(B_0)$ gives
\[
\bigl|\Absset(p_1(A_0))\cdot \Absset(p_2(B_0))\bigr|
=\bigl|\Absset\bigl(p_1(A_0)\cdot p_2(B_0)\bigr)\bigr|
\le |p_1(A)\cdot p_2(B)|.
\]
The analogous bound holds for $q_1,q_2$, completing the proof of (ii).

\medskip
\noindent (iii)
Let
\[
A_1=\{a\in A:\ q_1(a)\neq 0\},\qquad
B_1=\{b\in B:\ q_2(b)\neq 0\},
\]
so $|A_1|\ge n-\delta$ and $|B_1|\ge n-\delta$.

Apply Theorem~\ref{thm:enr} with $f=\mathrm{id}$ and $g=\log\abs{\cdot}$, and set
\[
\begin{aligned}
S&=\{(p_1(a),\,\log\abs{q_1(a)}):\ a\in A_1\},\\
T&=p_2(B)\times \LogAbsset(q_2(B_1)).
\end{aligned}
\]
We check that the curve $\{(p_1(t),\log\abs{q_1(t)})\}$ is not contained in an affine line:
suppose for contradiction that there exist $\alpha,\beta,\gamma\in\mathbb R$ with $(\alpha,\beta)\neq(0,0)$ such that
\[
\alpha p_1(t)+\beta\log\abs{q_1(t)}=\gamma
\]
for all $t$ in the domain.
If $\beta=0$, then $p_1$ is constant, a contradiction. Thus $\beta\neq 0$.
Then $\alpha p_1(t)=\gamma-\beta\log\abs{q_1(t)}=O(\log t)$ as $t\to+\infty$.
Since $p_1$ is nonconstant, $p_1(t)$ grows polynomially in $t$, so this forces $\alpha=0$.
But then $\log\abs{q_1(t)}$ is constant on an infinite set, forcing $q_1$ to be constant, a contradiction.
Thus Theorem~\ref{thm:enr} applies.

Also $|p_2(B)|\gg_\delta n$ and $|\LogAbsset(q_2(B_1))|\gg_\delta n$, hence $|T|\gg_\delta n^2$,
so $|S+T|\gg_\delta n^{5/2}$.
Finally, using $\Absset(U)\cdot \Absset(V)=\Absset(U\cdot V)$, we have
\[
\bigl|\LogAbsset(q_1(A_1))+\LogAbsset(q_2(B_1))\bigr|
=\bigl|\Absset(q_1(A_1))\cdot \Absset(q_2(B_1))\bigr|
=\bigl|\Absset\bigl(q_1(A_1)\cdot q_2(B_1)\bigr)\bigr|
\le |q_1(A)\cdot q_2(B)|.
\]
This proves (iii).

\section{Proof of Theorem~\ref{thm:jrt1}: multiplicative and mixed cases}\label{app:jrt-cases}
\medskip
\noindent{ \emph{The multiplicative--multiplicative case.}}
Assume
\[
P(x,y)=f\big(u_1(x)u_2(y)\big),\qquad Q(x,y)=g\big(v_1(x)v_2(y)\big).
\]
By dividing $u_1,u_2$ by their leading coefficients and absorbing the resulting nonzero constant into $f$
(and similarly for $v_1,v_2,g$), we may assume $u_1,u_2,v_1,v_2$ are monic.

If $u_1\equiv_m v_1$ and $u_2\equiv_m v_2$, then (ii) holds and we are done.
Otherwise, after possibly swapping the roles of $A,B$ and swapping $(u_1,v_1)$ with $(u_2,v_2)$,
we may assume $u_1\not\equiv_m v_1$.
Applying Lemma~\ref{lem:sum}(ii) gives
\[
|u_1(A)\cdot u_2(B)|\cdot |v_1(A)\cdot v_2(B)|\gg_{\delta} n^{5/2}.
\]
By Lemma~\ref{lem:size} applied to $f$ and $g$ on these two product sets, we have
\[
|P(A,B)|=|f(u_1(A)\cdot u_2(B))|\gg_{\delta} |u_1(A)\cdot u_2(B)|,
\]
\[
|Q(A,B)|=|g(v_1(A)\cdot v_2(B))|\gg_{\delta} |v_1(A)\cdot v_2(B)|.
\]
Therefore $|P(A,B)|\cdot |Q(A,B)|\gg_{\delta} n^{5/2}$, hence again
\[
\max\{|P(A,B)|,|Q(A,B)|\}\gg_{\delta} n^{5/4}.
\]

\medskip
\noindent{\emph{The mixed additive--multiplicative case.}}
Without loss of generality, assume
\[
P(x,y)=f\big(u_1(x)+u_2(y)\big),\qquad Q(x,y)=g\big(v_1(x)v_2(y)\big).
\]
Applying Lemma~\ref{lem:sum}(iii) (with $p_1=u_1$, $p_2=u_2$, $q_1=v_1$, $q_2=v_2$) gives
\[
|u_1(A)+u_2(B)|\cdot |v_1(A)\cdot v_2(B)|\gg_{\delta} n^{5/2}.
\]
Using Lemma~\ref{lem:size} as before,
\[
|P(A,B)|\gg_{\delta} |u_1(A)+u_2(B)|,\qquad |Q(A,B)|\gg_{\delta} |v_1(A)\cdot v_2(B)|.
\]
Thus $|P(A,B)|\cdot |Q(A,B)|\gg_{\delta} n^{5/2}$, and hence
\[
\max\{|P(A,B)|,|Q(A,B)|\}\gg_{\delta} n^{5/4}.
\]

\medskip
Combining the three cases, we conclude that
$\max\{|P(A,B)|,|Q(A,B)|\}\gg_{\delta} n^{5/4}$
unless (i) or (ii) holds.

\section{The complete proof of Theorem \ref{thm:main2}}\label{app:mm}

\smallskip
\noindent{ \emph{The multiplicative--multiplicative case.}}
Assume we are in the multiplicative--multiplicative case:
\[
P(x_{1},\ldots,x_{d})=f\!\Big(\prod_{i=1}^{d}u_{i}(x_{i})\Big),
\qquad
Q(x_{1},\ldots,x_{d})=g\!\Big(\prod_{i=1}^{d}v_{i}(x_{i})\Big),
\]
with $f,g,u_{1},\ldots,u_{d},v_{1},\ldots,v_{d}\in\mathbb R[x]$ nonconstant.

\medskip
\noindent\emph{Monic normalization (needed for the $\equiv_m$-vs-line argument).}
Write $u_i(x)=c_i\,\widetilde u_i(x)$ and $v_i(x)=d_i\,\widetilde v_i(x)$ where $c_i,d_i\in\mathbb R^\times$ and
$\widetilde u_i,\widetilde v_i$ are monic.
Define new outer polynomials
\[
\widetilde f(z):=f\Big(\big(\prod_{i=1}^d c_i\big)\,z\Big),
\qquad
\widetilde g(z):=g\Big(\big(\prod_{i=1}^d d_i\big)\,z\Big).
\]
Then
\[
P=\widetilde f\!\Big(\prod_{i=1}^d \widetilde u_i(x_i)\Big),\qquad
Q=\widetilde g\!\Big(\prod_{i=1}^d \widetilde v_i(x_i)\Big),
\]
and for every choice of sets $A_1,\dots,A_d$ we have
$P(A_1,\dots,A_d)=\widetilde f(\prod_i \widetilde u_i(A_i))$ and similarly for $Q$.
Thus we may (and do) assume from now on that every $u_i$ and $v_i$ is monic.

\medskip
For each $i$, since $\deg u_i,\deg v_i\le \delta$, the set
\[
Z_i:=\{a\in A_i:\ u_i(a)=0 \ \text{or}\ v_i(a)=0\}
\]
has size $|Z_i|\ll_\delta 1$. Replace $A_i$ by $A_i':=A_i\setminus Z_i$ and write
$n':=\min_i |A_i'|$, so $n'\sim_\delta n$. On $A_i'$ the quantities $\log|u_i(\cdot)|$ and $\log|v_i(\cdot)|$ are well-defined.

Define the log-sumsets
\[
\widetilde U_d:=\sum_{i=1}^{d}\log|u_i(A_i')|,
\qquad
\widetilde V_d:=\sum_{i=1}^{d}\log|v_i(A_i')|.
\]

We prove the following.

\smallskip
\noindent{\bf Statement $\widetilde{\mathcal P}(d,t)$.}
If there exists $I\subseteq[d]$ with $|I|=t$ such that $u_i\not\equiv_m v_i$ for all $i\in I$, then
\[
|\widetilde U_d|\cdot |\widetilde V_d|\gg_{\delta} (n')^{3-\frac1{2^{t}}}.
\]

\smallskip
\noindent{\it Base case $\widetilde{\mathcal P}(2,1)$.}
Assume wlog that $u_1\not\equiv_m v_1$. Set
\[
S:=\{(\log|u_1(a)|,\,\log|v_1(a)|):\ a\in A_1'\}\subset\mathbb R^2,
\qquad
T:=\log|u_2(A_2')|\times \log|v_2(A_2')|.
\]
Then $S+T\subseteq \widetilde U_2\times \widetilde V_2$, hence
\[
|\widetilde U_2|\cdot |\widetilde V_2|\ge |S+T|.
\]

We claim the curve
\[
\{(\log|u_1(x)|,\log|v_1(x)|): x\in\mathbb R,\ u_1(x)v_1(x)\neq 0\}
\]
is not contained in an affine line.
Indeed, if it were, there exist $\alpha,\beta,\gamma\in\mathbb R$ with $(\alpha,\beta)\neq(0,0)$ such that
\[
\alpha\log|u_1(x)|+\beta\log|v_1(x)|=\gamma
\]
for all $x$ in the domain.
If $\alpha=0$, then $\log|v_1(x)|$ is constant on an infinite set, forcing $v_1$ to be constant, a contradiction.
Similarly, if $\beta=0$, then $u_1$ is constant, a contradiction. Hence $\alpha\beta\neq 0$.

Letting $x\to+\infty$ and using
$\log|u_1(x)|=(\deg u_1)\log x+O(1)$ and
$\log|v_1(x)|=(\deg v_1)\log x+O(1)$ gives
$\alpha\deg(u_1)+\beta\deg(v_1)=0$, hence $\alpha/\beta\in\mathbb Q$.
Scaling, we may assume $\alpha=m$ and $\beta=-n$ for coprime integers $m,n\ge 1$, so
$m\log|u_1(x)|-n\log|v_1(x)|=\gamma$, i.e.
$|u_1(x)|^m=e^{\gamma}|v_1(x)|^n$.
Squaring yields $u_1(x)^{2m}=e^{2\gamma}v_1(x)^{2n}$ as polynomials.
Since $u_1,v_1$ are monic, comparing leading coefficients gives $e^{2\gamma}=1$, hence $u_1^{2m}=v_1^{2n}$ and so
$u_1\equiv_m v_1$, contradicting the assumption.
This proves the claim.

Thus Theorem~\ref{thm:enr} applied with $f=g=\log|\cdot|$, $p=u_1$, $q=v_1$, and $A=A_1'$ gives
\[
|S+T|\gg_\delta \min\{\,|A_1'||T|,\ |A_1'|^{3/2}|T|^{1/2}\,\}.
\]
Since $\deg u_2,\deg v_2\le\delta$ and $u_2,v_2$ are nonconstant, we have
$|u_2(A_2')|\ge |A_2'|/\delta$ and $|v_2(A_2')|\ge |A_2'|/\delta$.
Moreover, the map $y\mapsto \log|y|$ has fibers of size at most $2$ on $\mathbb R\setminus\{0\}$, hence
\[
|\log|u_2(A_2')||\ge \frac12|u_2(A_2')|\gg_\delta n',
\qquad
|\log|v_2(A_2')||\ge \frac12|v_2(A_2')|\gg_\delta n'.
\]
Therefore $|T|\gg_\delta (n')^2$, so in particular $|T|\ge |A_1'|$ for $n$ large (and the small-$n$ case is absorbed
by adjusting implicit constants). Hence the minimum is the second term and we obtain
\[
|S+T|\gg_\delta |A_1'|^{3/2}|T|^{1/2}\gg_\delta (n')^{3/2}\cdot((n')^2)^{1/2}=(n')^{5/2},
\]
which proves $\widetilde{\mathcal P}(2,1)$.

\smallskip
\noindent{\it Induction.}
Assume $\widetilde{\mathcal P}(d,t)$. We show it implies both
$\widetilde{\mathcal P}(d+1,t+1)$ and $\widetilde{\mathcal P}(d+1,t)$.

\smallskip
\noindent{\bf (1) $\widetilde{\mathcal P}(d,t)\Rightarrow \widetilde{\mathcal P}(d+1,t+1)$.}
Let $I\subseteq[d+1]$ with $|I|=t+1$ and $u_i\not\equiv_m v_i$ for all $i\in I$.
Relabel so that $1\in I$ and set
\[
S:=\{(\log|u_1(a)|,\,\log|v_1(a)|):\ a\in A_1'\},\qquad
T:=\widetilde U'\times \widetilde V',
\]
where
\[
\widetilde U':=\sum_{i=2}^{d+1}\log|u_i(A_i')|,
\qquad
\widetilde V':=\sum_{i=2}^{d+1}\log|v_i(A_i')|.
\]
Then $S+T\subseteq \widetilde U_{d+1}\times\widetilde V_{d+1}$, so
$|\widetilde U_{d+1}|\cdot|\widetilde V_{d+1}|\ge |S+T|$.
As above, $u_1\not\equiv_m v_1$ implies the curve of $(\log|u_1|,\log|v_1|)$ is not contained in an affine line, hence by Theorem~\ref{thm:enr},
\[
|S+T|\gg_\delta \min\{\,|A_1'||T|,\ |A_1'|^{3/2}|T|^{1/2}\,\}.
\]
By $\widetilde{\mathcal P}(d,t)$ applied to the remaining $d$ indices (since $I\setminus\{1\}$ has size $t$),
\[
|T|=|\widetilde U'|\cdot|\widetilde V'|\gg_\delta (n')^{3-\frac1{2^t}}.
\]
In particular $|T|\ge |A_1'|$ for $n$ large, so the minimum is the second term and
\[
|S+T|\gg_\delta (n')^{3/2}\cdot \Big((n')^{3-\frac1{2^t}}\Big)^{1/2}
=(n')^{3-\frac1{2^{t+1}}},
\]
which is $\widetilde{\mathcal P}(d+1,t+1)$.

\smallskip
\noindent{\bf (2) $\widetilde{\mathcal P}(d,t)\Rightarrow \widetilde{\mathcal P}(d+1,t)$.}
If $I\subseteq[d+1]$ with $|I|=t$ and $d+1\notin I$, then applying $\widetilde{\mathcal P}(d,t)$ to indices $1,\dots,d$ gives
$|\widetilde U_d|\cdot|\widetilde V_d|\gg_\delta (n')^{3-\frac1{2^t}}$.
Since adding a nonempty set cannot decrease cardinality, we have
$|\widetilde U_{d+1}|\ge|\widetilde U_d|$ and $|\widetilde V_{d+1}|\ge|\widetilde V_d|$,
hence $\widetilde{\mathcal P}(d+1,t)$ follows (the case $d+1\in I$ is the same after relabeling).

\smallskip
\noindent Therefore $\widetilde{\mathcal P}(d,t)$ holds for all $d\ge2$ and $1\le t\le d-1$.

\smallskip
\noindent Returning to our situation, since $|\{i:\ u_i\not\equiv_m v_i\}|\ge t$, we obtain
\[
|\widetilde U_d|\cdot |\widetilde V_d|\gg_\delta (n')^{3-\frac1{2^t}}\sim_\delta n^{3-\frac1{2^t}}.
\]

Now set
\[
W_P:=\prod_{i=1}^d u_i(A_i'),\qquad W_Q:=\prod_{i=1}^d v_i(A_i').
\]
Since all factors are nonzero on $A_i'$, every element of $W_P$ and $W_Q$ is nonzero.
Moreover, by $\log|xy|=\log|x|+\log|y|$ we have the exact identities of sets
\[
\log|W_P|:=\{\log|w|:\ w\in W_P\}=\sum_{i=1}^d \log|u_i(A_i')|=\widetilde U_d,
\]
\[
\log|W_Q|:=\{\log|w|:\ w\in W_Q\}=\sum_{i=1}^d \log|v_i(A_i')|=\widetilde V_d.
\]
Therefore $|W_P|\ge |\widetilde U_d|$ and $|W_Q|\ge |\widetilde V_d|$.

Since $\deg f,\deg g\le \delta$, every value of $f$ (resp.\ $g$) has at most $\delta$ preimages, hence
\[
|P(A_1',\ldots,A_d')|=|f(W_P)|\ge \frac{|W_P|}{\delta}\gg_\delta |\widetilde U_d|,
\qquad
|Q(A_1',\ldots,A_d')|=|g(W_Q)|\ge \frac{|W_Q|}{\delta}\gg_\delta |\widetilde V_d|.
\]
Therefore
\[
|P(A_1,\ldots,A_d)|\cdot |Q(A_1,\ldots,A_d)|
\gg_\delta
|\widetilde U_d|\cdot|\widetilde V_d|
\gg_\delta
n^{3-\frac1{2^t}},
\]
and the same contradiction as in the additive--additive case follows.

\medskip
\noindent{\emph{The mixed additive--multiplicative case.}}
Assume $P$ is additive and $Q$ is multiplicative:
\[
P(x_{1},\ldots,x_{d})=f\!\Big(\sum_{i=1}^{d}u_{i}(x_{i})\Big),
\qquad
Q(x_{1},\ldots,x_{d})=g\!\Big(\prod_{i=1}^{d}v_{i}(x_{i})\Big),
\]
with $f,g,u_i,v_i$ nonconstant.
As above, remove zeros of the multiplicative factors: let
$A_i':=\{a\in A_i:\ v_i(a)\ne 0\}$, so $|A_i\setminus A_i'|\ll_\delta 1$ and
$n':=\min_i |A_i'|\sim_\delta n$.
Define
\[
U_d:=\sum_{i=1}^d u_i(A_i'),\qquad
\widetilde V_d:=\sum_{i=1}^d \log|v_i(A_i')|.
\]

We claim that for every $d\ge2$,
\[
|U_d|\cdot|\widetilde V_d|\gg_\delta (n')^{3-\frac1{2^{d-1}}}.
\tag{$\star_d$}
\]
\emph{Proof by induction on $d$.}
For $d=2$, set
\[
S:=\{(u_1(a),\,\log|v_1(a)|):\ a\in A_1'\}\subset\mathbb R^2,
\qquad
T:=u_2(A_2')\times \log|v_2(A_2')|.
\]
Then $S+T\subseteq U_2\times \widetilde V_2$, hence $|U_2|\cdot|\widetilde V_2|\ge |S+T|$.
The curve $\{(u_1(x),\log|v_1(x)|):x\in\mathbb R,\ v_1(x)\ne0\}$ is not contained in an affine line:
suppose for contradiction that there exist $\alpha,\beta,\gamma\in\mathbb R$ with $(\alpha,\beta)\neq(0,0)$ such that
\[
\alpha u_1(x)+\beta\log|v_1(x)|=\gamma
\]
for all $x$ in the domain.
If $\beta=0$, then $u_1$ is constant, a contradiction. Thus $\beta\neq 0$.
Then
\[
\alpha u_1(x)=\gamma-\beta\log|v_1(x)|=O(\log x)\qquad(x\to+\infty).
\]
Since $u_1$ is nonconstant, it has polynomial growth, so this forces $\alpha=0$.
But then $\log|v_1(x)|$ is constant on an infinite set, forcing $v_1$ to be constant, a contradiction.
Thus Theorem~\ref{thm:enr} with $(f,g)=(\mathrm{id},\log|\cdot|)$ applies and gives
\[
|S+T|\gg_\delta \min\{\,|A_1'||T|,\ |A_1'|^{3/2}|T|^{1/2}\,\}.
\]
As before, $|u_2(A_2')|\gg_\delta n'$ and $|\log|v_2(A_2')||\gg_\delta n'$, hence $|T|\gg_\delta (n')^2$,
so the minimum is the second term and we obtain $|S+T|\gg_\delta (n')^{5/2}$, which is $(\star_2)$.

Now assume $(\star_d)$ holds. For $d+1$, define
\[
S:=\{(u_1(a),\,\log|v_1(a)|):\ a\in A_1'\},\qquad
T:=U'\times \widetilde V',
\]
where $U':=\sum_{i=2}^{d+1}u_i(A_i')$ and
$\widetilde V':=\sum_{i=2}^{d+1}\log|v_i(A_i')|$.
Then $S+T\subseteq U_{d+1}\times \widetilde V_{d+1}$, so
$|U_{d+1}|\cdot|\widetilde V_{d+1}|\ge |S+T|$.
Apply Theorem~\ref{thm:enr} again, 
\[
|S+T|\gg_\delta \min\{\,|A_1'||T|,\ |A_1'|^{3/2}|T|^{1/2}\,\}.
\]
By the induction hypothesis $(\star_d)$ applied to indices $2,\dots,d+1$,
\[
|T|=|U'|\cdot|\widetilde V'|\gg_\delta (n')^{3-\frac1{2^{d-1}}}\ge |A_1'|
\]
for $n$ large, so the minimum is the second term and we get
\[
|U_{d+1}|\cdot|\widetilde V_{d+1}|
\gg_\delta
(n')^{3/2}\cdot \Big((n')^{3-\frac1{2^{d-1}}}\Big)^{1/2}
=
(n')^{3-\frac1{2^{d}}},
\]
which is $(\star_{d+1})$. This proves $(\star_d)$ for all $d\ge2$.

\smallskip
\noindent Finally, since $\deg f,\deg g\le\delta$, we have
\[
|P(A_1',\ldots,A_d')|=|f(U_d)|\ge |U_d|/\delta\gg_\delta |U_d|.
\]
Also, letting $W_Q:=\prod_{i=1}^d v_i(A_i')$, we have the exact identity
\[
\log|W_Q|=\sum_{i=1}^d \log|v_i(A_i')|=\widetilde V_d,
\]
hence $|W_Q|\ge |\widetilde V_d|$.
Therefore
\[
|Q(A_1',\ldots,A_d')|=|g(W_Q)|\ge |W_Q|/\delta \gg_\delta |\widetilde V_d|.
\]
Consequently,
\[
|P(A_1,\ldots,A_d)|\cdot |Q(A_1,\ldots,A_d)|
\gg_\delta
|U_d|\cdot|\widetilde V_d|
\gg_\delta
n^{3-\frac1{2^{d-1}}},
\]
and so
\[
\max\{|P(A_1,\ldots,A_d)|,\ |Q(A_1,\ldots,A_d)|\}
\ge (|P|\cdot |Q|)^{1/2}
\gg_\delta
n^{\frac32-\frac1{2^{d}}}.
\]
Since $t+1\le d$, we have $\frac1{2^{d}}\le \frac1{2^{t+1}}$, hence this bound is stronger than
$n^{\frac32-\frac1{2^{t+1}}}$, giving the desired contradiction in the mixed case as well.

\bibliographystyle{amsalpha}
\bibliography{ref}
\end{document}